\numberwithin{equation}{section} 
\newcommand{\R}{\mathbb{R}}
\newcommand{\N}{\mathbb{N}}
\newcommand{\Leb}{\mathcal{L}} 
\newcommand{\M}{\mathcal{M}} 
\newcommand{\ddr}{\mathrm{d}} 
\newcommand{\dr}{\partial}
\newcommand{\Phiinc}{\Phi_\text{inc}}
\newcommand{\W}{\mathcal{W}}
\newcommand{\nO}{\mathbf{n}_{\Omega}} 
\newcommand{\1}{\mathbbm{1}} 
\newcommand{\dst}[1]{\displaystyle{#1}}
\newtheorem{thm}{Theorem}[section]
\newtheorem{prop}[thm]{Proposition}
\newtheorem{lem}[thm]{Lemma}
\newtheorem{cor}[thm]{Corollary}
\theoremstyle{definition}
\newtheorem{definition}[thm]{Definition}
\newtheorem{example}[thm]{Example}
\theoremstyle{remark}
\newtheorem{remark}[thm]{Remark}
\subjclass[2010]{Primary 74G65. Secondary 49N15, 35Q74}
\begin{document}

\title{Hidden convexity in a problem of nonlinear elasticity} 

\date{\today}
\author{Nassif Ghoussoub}
\address{Department of Mathematics, University of British Columbia}
\email{\{nassif,yhkim,lavenant,azp\}@math.ubc.ca}
\author{Young-Heon Kim}
\author{Hugo Lavenant}
\author{Aaron Zeff Palmer}

\begin{abstract}
We study compressible and incompressible nonlinear elasticity variational problems in a general context. 
Our main result gives a sufficient condition for an equilibrium to be a global energy minimizer, in terms of convexity properties of the pressure in the deformed configuration. We also provide a convex relaxation of the problem together with its dual formulation, based on measure-valued mappings, which coincides with the original problem under our condition. 
\end{abstract}

\maketitle

\tableofcontents

\section{Introduction}

In this article, we study a problem of calculus of variations of the form
\begin{equation}
\label{eq_informal}
\min_u \left\{ \int_\Omega \Big(W(\nabla u) +F(u)\Big)\ddr \Leb_\Omega + \Phi( u \# \Leb_\Omega ); \ u : \Omega \to D \text{ and } u = g \text{ on } \dr \Omega \right\}
\end{equation} 
where the unknown, $u : \Omega \subset \R^d \rightarrow  D\subset \R^k$, is a vector-valued function representing the deformation of an elastic solid when $d=k$.  The data consists of the boundary values fixed by a function $g : \dr \Omega \to \dr D$; a potential energy $F : \Omega \times D\to \R$; and the hyper-elastic stored energy $W:\Omega \times \R^{k \times d} \to \R$ (dependence on $x\in \Omega$ is suppressed in our notation for \eqref{eq_informal} and what follows). The functional $\Phi$ is convex on the set of finite nonnegative measures over $D$. We let $\Leb_\Omega$ denote the Lebesgue measure restricted to $\Omega$  and $u \# \Leb_\Omega$ is the push-forward image measure of $\Leb_\Omega$ by $u$. 

The first example is the incompressibility constraint, imposed by
$$
	\Phi_{\text{inc}}(\mu) = \begin{cases} 0 & \mu = \Leb_D \\ +\infty & {\rm otherwise}.\end{cases}
$$
When $d=k$, any $u$ with $u \# \Leb_\Omega=\Leb_D$ will be called incompressible, and if such a $u$ is sufficiently differentiable and injective then equivalently the Jacobian equation $\det(\nabla u)=1$ holds on $\Omega$. We will also consider integral function of the density, that is
\begin{equation}
\label{eq_phi_integral}
	\Phi(\mu) = \begin{cases} \dst{ \int_D  \phi \left( \frac{\ddr \mu}{\ddr \Leb_D} \right) \ddr \Leb_D} & \text{if } \mu \ll \Leb_D \\ 
	+\infty & {\rm otherwise}.\end{cases}
\end{equation}
where $\phi : [0, + \infty) \to \R$ is a convex function.
As shown in Section \ref{subsec_elasticity}, this will correspond to the energy of \emph{compressible} deformations in nonlinear elasticity.

The presence of $\Phi$ composed with the image measure of $u$ makes problem \eqref{eq_informal} nonlinear and nonconvex. However, \emph{we will provide a sufficient condition for a solution of the Euler-Lagrange equation of \eqref{eq_informal} to be the unique global minimizer of \eqref{eq_informal}}. Moreover, we will see that under this assumption the problem in fact coincides with a convex relaxation built on measure valued mappings. 

\bigskip

First, let us motivate this study. Though we will mainly deal with examples where $d=k$, that is $\Omega$ and $D$ lie in the same Euclidean space, our results also apply to $d \neq k$. In particular, the case $k=1$ shares some connections with hydrodynamics \cite{Brenier2015}. Problem \eqref{eq_informal} appears at least in two contexts.

\subsection{Optimal transport} 

If $\Omega = D$ and $W = 0$ (and we omit the boundary conditions) while $\Phi = \Phi_\text{inc}$ and $F(x,u) = -f(x) \cdot u$, then the problem reads
\begin{equation*}
\max_u \left \{ \int_\Omega f\cdot u\, \ddr\Leb_\Omega; \ u \# \Leb_\Omega = \Leb_\Omega \right\}, 
\end{equation*}
and one recovers the problem of polar factorization studied by Brenier \cite{Brenier1987, Brenier1991}. In this case, provided $f \# \Leb_\Omega$ does not charge small sets, then $f$ can be uniquely written $f = (\nabla \omega ) \circ u$ where $\omega$ is convex and $u$ is a solution of the problem. Actually, $\nabla \omega$ is the unique gradient of a convex function mapping $f \# \Leb_\Omega$ onto $\Leb_\Omega$. 

Hence Problem \eqref{eq_informal} can be seen as an optimal transport problem to which one adds a gradient penalization. Such problems have been considered in the PhD thesis of Louet \cite{LouetPhD} which in particular studies in great details the existence of a solution. What we have here can be seen as a simplified version of their problem: we only look at $\Phi(u \# \Leb_\Omega)$ while they consider $\Phi(u \# \alpha)$ with $\alpha \in \M(\Omega)$ possibly singular. However, the result that we have (solutions of the Euler-Lagrange equations are global minimizers under appropriate conditions) was not addressed at all in their work.

\subsection{Non Linear elasticity}
\label{subsec_elasticity}

In nonlinear elasticity theory, one is led to consider problem of the form
\begin{equation}\label{eqn:elasticity}
\min_{u} \left\{ \int_\Omega\Big( W(\nabla u) +h(\det \nabla u)+F(u)\Big)\ddr \Leb_\Omega; \  u=g\ \mbox{on }\dr\Omega \right\},
\end{equation}
where $k=d$ and $u : \Omega \to D$ is the deformation of an elastic solid. The functions $W$ and $h$ are assumed to be convex, in such a way that $C \mapsto W(C) + h(\det C)$ is a polyconvex function on the set of matrices. The function $h(t)$ usually tends to $ + \infty$ when $t \to 0$, and a limit case is when $h(t) = + \infty$ for all $t$ but $t=1$, which implies that $u \# \Leb_\Omega = \Leb_D$ and $u$ is incompressible.

By a change of variable formula, this problem falls into our framework. Let us define the function $\phi_h$ by $\phi_h(s) = h(s^{-1}) s$ and the functional 
\begin{equation*}
	\Phi_h(\mu) = \begin{cases} \dst{ \int_D  \phi_h \left( \frac{\ddr \mu}{\ddr \Leb_D} \right) \ddr \Leb_D} & \text{if } \mu \ll \Leb_D \\ 
	+\infty & {\rm otherwise}.\end{cases}
\end{equation*}
which we already introduced in \eqref{eq_phi_integral}. Then, for injective $u$ we can write
\begin{equation*}
\int_\Omega h(\det \nabla u)\ddr \Leb_\Omega = \Phi_h(u \# \Leb_\Omega).
\end{equation*}
To justify this, we can note that $\frac{\ddr \Leb_D}{\ddr u\# \Leb_\Omega}\circ u = \det(\nabla u)$ and calculate
$$
	\Phi_h(u \# \Leb_\Omega)=\int_D \phi_h \left( \frac{\ddr u\# \Leb_\Omega}{\ddr \Leb_D} \right) \ddr \Leb_D=\int_D h \left( \frac{\ddr \Leb_D}{\ddr u\# \Leb_\Omega} \right) \ddr u\#\Leb_\Omega=\int_\Omega h (\det \nabla u) \, \ddr\Leb_\Omega.
$$
The function $\phi_h$ is convex (hence the functional $\Phi_h$ is also convex on the set of measures) provided $h$ is convex, since we can write it as a supremum of convex functions:
\begin{equation*}
\phi_h(s) = \sup_r ( r - s h^*(r) )
\end{equation*} 
(with $h^*$ being the Legendre transform of $h$). 
The now classical work of Ball \cite{Ball1976convexity}, guarantees conditions on $W$ and $h$ such that minimizers exist in a class of weakly differentiable Sobolev functions. Further regularity of these minimizers remains unknown as a major unsolved problem.

One may ask why in (\ref{eq_informal}) we have prescribed the codomain, $D$, along with the boundary conditions, whereas in the problem of elasticity, (\ref{eqn:elasticity}), one only prescribes the boundary conditions.  For the sufficiently regulary nonsingular deformations of elasticity, $f=g$ on $\partial\Omega$ implies $f(\Omega)=g(\Omega)=D$; see Theorem 5.5-2 of \cite{ciarlet1988mathematical}.
However, in the sequel we work with (\ref{eq_informal}) and we allow for less regular and more singular maps, thus we prescribe the codomain as an additional constraint.

The question of uniqueness of solutions to the equilibrium equations, as well as whether they are global minimizers is an open question in calculus of variations \cite{Ball2002, Ball2010progress}, with counterexamples for some special cases. Examples inspired from the theory of elasticity will be discussed in Section \ref{section_examples}. We can already highlight some insight from this theory:
\begin{itemize}
\item To get uniqueness of equilibrium, one needs to restrict to pure displacement on the boundaries, that is, only Dirichlet boundary conditions as we have done here.
\item There are situations where allowing discontinuous deformations can result in multiple equilibria that all are global minimizers.  Superlinear growth of $C\mapsto W(C)$ or stronger  seems necessary to avoid the phenomenon of \emph{cavitation} and guarantee uniqueness. 
\item If the domain $\Omega$ is not simply connected, it may be possible to construct examples with infinitely many local minima (hence solutions of the equilibrium equations) but only one global minimizer \cite{Post1997homotopy}.  
\end{itemize}
Our main result is that any solution of the equilibrium equations which is smooth and small in a certain sense (namely the pressure, expressed in the deformed configuration must be $\lambda$-convex with $\lambda$ not too negative) is the unique global minimizer (but there may be other local minima). It can be seen as a partial answer to \cite[Problem 8]{Ball2002}. 

\bigskip

In the rest of this article, we first state rigorously the problem and derive the Euler-Lagrange equations. Then we state our sufficiency condition for being a global minimizer and we prove it. We provide examples to discuss the sharpness of the result. Eventually we introduce a convex relaxation based on measure valued mappings which coincides with the original problem under the same smallness assumption on the pressure. This last part does not provide new results on the original problem but we think is interesting on its own and relates to the parallel work of \cite{AwiGangbo2014}, which was the starting point of our study. 

\section{Existence of minimizers and optimality conditions}

\subsection{Notation and  the variational problem of interest}

Let $\Omega\subset \R^d$ and $D\subset \R^k$ be open and bounded domains with Lipschitz boundaries. We denote by respectively $\Leb_\Omega$ and $\Leb_D$ the Lebesgue measure restricted to $\Omega$ and $D$. 
The set of finite Radon measures on a metric space $X$ is denoted by $\M(X)$ and we endow it with the topology of weak* convergence. The set of positive Radon measures on $X$ is denoted by $\M_+(X)$. 

If $T : X \to Y$ and $\alpha \in \M(X)$ we denote by $T \# \alpha\in \M(Y)$ the push-forward of the measure $\alpha$ by $T$, that is the measure defined by $T \# \alpha(B) = \alpha(T^{-1}(B))$ for any Borel set $B$ in $Y$. We often make use of the change of variables, that if $T:X\to Y$ is $\alpha$ measureable and $g$ is continuous on $Y$ then
$$
	\int_X g\circ T\, \ddr \alpha = \int_Y g\, \ddr T\#\alpha.
$$

The notation $| x |$ stands for the Euclidean norm of $x$ if $x$ is a vector, and the Frobenius norm (also known as the Hilbert-Schmidt norm) if $x$ is a matrix. 

For the data of the problem, let $W : \Omega \times\R^{k \times d} \to \R$ be a Carathéodory function satisfying the coercivity and boundedness assumptions
\begin{equation*}
\frac{1}{C} ( |H|^{p} - 1 ) \leqslant W(H) \leqslant C (|H|^p+1)
\end{equation*}
for some $C > 0$ and $p>1$, and we assume that $H\mapsto W(H)$ is convex almost everywhere on $\Omega$.  
We take $g : \dr \Omega \to \partial D$ for the boundary condition, which we assume extends to a function on $\Omega$ with $g(\Omega)=D$, and $F$ a Carathéodory function satisfying for all $u : \Omega \to D$,  
$$
	\int_\Omega |F(u)|\ddr \Leb_\Omega <+\infty
$$
and $u\mapsto F(u)$ is convex.

\begin{remark}
Not to overburden notations, we suppress the dependence on $x$ of $W$ and $F$ in the present article. For instance, an expression like $W(\nabla u)$ means $W(x, \nabla u(x))$ while $F(u) = F(x,u(x))$. 
\end{remark}

We denote by $W^{1,p}_{g}(\Omega,D)$ the set of Sobolev functions $u\in W^{1,p}(\Omega,\R^k)$ such that the trace of $u$ on $\dr \Omega$ is $g$ and the range of $u$ remains in $D$. 
We let $\bar D = D \cup \partial D$ and we take $\Phi : \M_+(\bar{D}) \to \R \cup \{ + \infty \}$ to be convex, bounded below and lower semi-continuous.  

\begin{definition}
\label{def_energy}
We define the energy $E : W^{1,p}(\Omega,\R^k) \to \R \cup \{ + \infty \}$ as 
\begin{equation*}
E(u) = \int_\Omega \Big(W(\nabla u) +F(u)\Big)\ddr \Leb_\Omega + \Phi(u \# \Leb_\Omega). 
\end{equation*}
The problem with penalization of the image measure we are looking at is 
\begin{equation}
\label{eq_problem_main}
\inf \left\{ E(u); \ u \in W^{1,p}_{g}(\Omega, D) \right\}.
\end{equation}
\end{definition}
As noted in Proposition \ref{prop_existence_min} below, it is also possible to include the additional constraint that $u$ is injective on $\Omega$.

Note that it is not obvious that there exists at least one admissible competitor in \eqref{eq_problem_main}, or more generally that there exists a smooth map $u : \Omega \to D$ such that $u \# \alpha = \beta$, being $\alpha$ and $\beta$ two measures on $\Omega$ and $D$ respectively. If $\Omega = D$ is connected and has a smooth boundary, and $\alpha$ and $\beta$ have smooth densities bounded from below and above with respect to the Lebesgue measure, one can use \cite[Theorem 1.1]{dacorogna1990partial}. In such a case, Dirichlet boundary conditions can be prescribed. The result is easy to extend to $\Omega \neq D$ if there exists a smooth diffeomorphism between $\Omega$ and $D$ which has $g$ as boundary value. If $\Omega$ and $D$ are convex with a smooth boundary one can take instead $u$ to be the optimal transport map for the quadratic cost. This map $u$ is smooth \cite{caffarelli92, caffarelli96, urbas97} under slightly weaker smoothness assumptions on $\Omega, D,\alpha$ and $\beta$ compared to \cite[Theorem 1.1]{dacorogna1990partial}, but then Dirichlet boundary conditions cannot prescribed.
In the compressible case of $\Phi_h$, existence is easier to get, as we just need a smooth $u : \Omega \to D$ with prescribed boundary conditions and $\det \nabla u$ bounded from below and above. 
In any case, once existence of a competitor is known then existence of a minimizer of $E$ follows easily. 

\begin{prop}
\label{prop_existence_min}
Assume for $p >1$ that there exists $u\in W_{g}^{1,p}(\Omega,D)$ such that $\Phi(u\#\Leb_\Omega) < + \infty$. Then there exists a global minimizer of $E$ in $W_{g}^{1,p}(\Omega,D)$.  If $p>d$, then the result holds with the additional constraint that $u$ is injective.
\end{prop}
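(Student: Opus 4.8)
The plan is to argue by the direct method of the calculus of variations. Since by assumption there is some $u_0 \in W^{1,p}_g(\Omega,D)$ with $\Phi(u_0 \# \Leb_\Omega) < +\infty$, and since $W$ is bounded below by $-1/C$, $\Phi$ is bounded below by hypothesis, and $F(\,\cdot\,)$ is bounded below on the bounded set $\bar D$ (by convexity and the existence of supporting affine functions), the infimum in \eqref{eq_problem_main} is finite. Let $(u_n)_n$ be a minimizing sequence. From the coercivity bound $W(H) \geqslant \frac{1}{C}(|H|^p - 1)$, together with the lower bounds on $F$ and $\Phi$, we get that $\|\nabla u_n\|_{L^p}$ is bounded; since the trace of $u_n$ on $\dr\Omega$ is the fixed function $g$ and the values of $u_n$ lie in the bounded set $D$, a Poincar\'e inequality then yields a uniform bound on $\|u_n\|_{W^{1,p}}$. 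Hence, up to a subsequence, $u_n \rightharpoonup u$ weakly in $W^{1,p}(\Omega,\R^k)$, and by the Rellich--Kondrachov theorem we may also assume $u_n \to u$ strongly in $L^p$ and $\Leb_\Omega$-almost everywhere. The trace being weakly continuous, $u = g$ on $\dr\Omega$, and the a.e.\ limit of maps valued in $D$ takes values in $\bar D$, so $u$ is admissible (recall that $\Phi$ is defined on $\M_+(\bar D)$).

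It then remains to show that $E$ is sequentially weakly lower semicontinuous along $(u_n)_n$, which I would do term by term. For the stored energy, the convexity of $H \mapsto W(H)$ together with the Carath\'eodory property and the growth bound gives, by the classical theory of convex integrands, that $u \mapsto \int_\Omega W(\nabla u)\,\ddr\Leb_\Omega$ is weakly lower semicontinuous on $W^{1,p}$. For the potential term, the convexity of $u \mapsto F(u)$ makes $u \mapsto \int_\Omega F(u)\,\ddr\Leb_\Omega$ convex and strongly lower semicontinuous (via Fatou, using the lower bound on $F$), hence weakly lower semicontinuous. For the last term, the a.e.\ convergence $u_n \to u$ implies $u_n \# \Leb_\Omega \rightharpoonup u \# \Leb_\Omega$ weakly$*$ in $\M_+(\bar D)$: indeed, for every $\psi \in C(\bar D)$, dominated convergence gives $\int_\Omega \psi(u_n)\,\ddr\Leb_\Omega \to \int_\Omega \psi(u)\,\ddr\Leb_\Omega$, which by the change of variables formula is exactly the statement that the push-forwards converge when tested against $\psi$. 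The lower semicontinuity of $\Phi$ for the weak$*$ topology then yields $\Phi(u\#\Leb_\Omega) \leqslant \liminf_n \Phi(u_n \# \Leb_\Omega)$. Summing the three inequalities gives $E(u) \leqslant \liminf_n E(u_n) = \inf E$, so $u$ is a global minimizer.

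For the case $p > d$ with the additional injectivity constraint, I would take the minimizing sequence inside the class of (a.e.) injective maps. The key extra ingredient is Morrey's embedding: since $p > d$, $W^{1,p}(\Omega,\R^k)$ embeds compactly into $C^{0,\alpha}(\bar\Omega)$, so up to a subsequence the minimizing sequence converges uniformly to the limit $u$ constructed above. The main obstacle is that uniform (or weak) limits of injective maps need not be injective; I expect this to be the only delicate point of the proof. To overcome it I would encode almost-everywhere injectivity through the Ciarlet--Ne\v{c}as condition and use that, under the uniform $W^{1,p}$ bound with $p > d$, this condition is preserved in the weak limit, so that $u$ is again a.e.\ injective \cite{ciarlet1988mathematical}. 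Granting this, the lower semicontinuity argument above applies verbatim and produces an injective global minimizer, while all the remaining estimates are the routine ingredients of the direct method.
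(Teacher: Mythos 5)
Your proof is correct and follows essentially the same route as the paper's: the direct method, with coercivity of $W$ giving a uniform $W^{1,p}$ bound, weak and a.e.\ convergence of a minimizing sequence yielding weak* convergence of the push-forward measures, term-by-term lower semicontinuity of $E$, and the Ciarlet--Ne\v{c}as condition to handle the injectivity constraint when $p>d$. The paper's proof is simply a terser version of yours (it cites the Ciarlet--Ne\v{c}as 1987 paper for the weak compactness of injective maps), so there is nothing substantive to compare.
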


\begin{proof}
It comes from the direct method of calculus of variations.   As $\Phi$ is bounded from below, with the coercivity of $W$ it is easy to show that any minimizing sequence $(u_n)_{n \in \N}$ is bounded in $W^{1,p}(\Omega,\R^k)$. Up to extraction of a subsequence, $(u_n)_{n \in \N}$ converges to $u$ weakly in $W^{1,p}(\Omega, \R^k)$, strongly in $L^p(\Omega,\R^k)$ and almost everywhere. The latter convergence implies that $\lim_n u_n \# \Leb_\Omega = u \# \Leb_\Omega$ in the weak* topology.  Then all the terms of $E$ are lower semi-continuous so the limit $u$ is a global minimizer. In the case that we include the constraint that $u$ is injective, weak compactness still holds for injective functions in $W^{1,p}_g(\Omega,D)$ when $p>d$ using the result of \cite{ciarlet1987injectivity}. 
\end{proof}

We now wish to further compare our result with the classical results of nonlinear elasticity.  Indeed, the problem \eqref{eq_problem_main} is equivalent to the nonlinear elasticity problem \eqref{eqn:elasticity} when $p>d$ and $\Phi$ has the form of $\Phi_{\text{inc}}$ or $\Phi_h$.  We consider a convex, lower semicontinuous function $h:\R\rightarrow \R\cup \{+\infty\}$ such $\lim_{t\rightarrow 0} h(t)=+\infty$ and $h$ is bounded from below, for which $\Phi_h$ is convex, lower semicontinuous and bounded below.

\begin{lem}
We suppose that $g$ is smooth, injective and orientation preserving.  Let $u\in C^1(\bar{\Omega},\R^k)$ with $u=g$ on $\partial\Omega$ and $\det(\nabla u)>0$ on $\Omega$. Then $\Phi_{\text{inc}}(u \# \Leb_\Omega)<+\infty$ if and only if $\det \nabla u =1$ everywhere on $\Omega$.  Similarly, there holds
\begin{equation*}
\int_\Omega h (\det \nabla u)\, \ddr \Leb_\Omega = \Phi_h(u \# \Leb_\Omega).
\end{equation*}
\end{lem}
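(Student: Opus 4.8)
The plan is to first upgrade the hypotheses to the statement that $u$ is a $C^1$-diffeomorphism from $\Omega$ onto $D$, and then to extract both conclusions from a single change of variables. For the diffeomorphism property I would argue by topological degree: since $u = g$ on $\partial\Omega$, the Brouwer degree $\deg(u,\Omega,\cdot)$ depends only on the boundary values, so $\deg(u,\Omega,y) = \deg(g,\Omega,y)$ for every $y \notin u(\partial\Omega) = \partial D$. Because $g$ is smooth, injective and orientation preserving with $g(\Omega) = D$, one has $\deg(g,\Omega,y) = 1$ for $y \in D$ and $0$ for $y \notin \bar D$; hence $u$ is onto $D$ and omits every value outside $\bar D$. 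The hypothesis $\det\nabla u > 0$ forces $u$ to be a local diffeomorphism at each point with all preimages counted with sign $+1$, so each $y \in D$ has exactly $\deg(u,\Omega,y) = 1$ preimage. This is precisely the global invertibility result of \cite[Theorem 5.5-2]{ciarlet1988mathematical} (see also \cite{ciarlet1987injectivity}), which I would invoke to conclude that $u : \Omega \to D$ is a $C^1$-diffeomorphism. I expect this topological step to be the main obstacle; everything afterwards is routine.

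Given the diffeomorphism and the continuity and strict positivity of $\det\nabla u$ on the compact set $\bar\Omega$, the image measure $u\#\Leb_\Omega$ is absolutely continuous with respect to $\Leb_D$. Testing against a continuous $\psi$ and using $\int_D \psi\,\ddr u\#\Leb_\Omega = \int_\Omega \psi\circ u\,\ddr\Leb_\Omega$ followed by the substitution $y = u(x)$ identifies the density as
\begin{equation*}
\frac{\ddr u\#\Leb_\Omega}{\ddr\Leb_D} = \frac{1}{\det\nabla u \circ u^{-1}}, \qquad \text{equivalently} \quad \frac{\ddr\Leb_D}{\ddr u\#\Leb_\Omega}\circ u = \det\nabla u .
\end{equation*}
I would also record that $u\#\Leb_\Omega$ charges no subset of $\partial D$, since $u^{-1}(\partial D)\subset\partial\Omega$ is $\Leb_\Omega$-negligible, so it is harmless to integrate the density over $D$ alone.

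For the incompressibility claim, $\Phi_{\text{inc}}(u\#\Leb_\Omega) < +\infty$ is by definition equivalent to $u\#\Leb_\Omega = \Leb_D$, i.e. to the density above being $1$ for $\Leb_D$-a.e. $y$; pulling this back through $u$ and using the continuity of $\det\nabla u$ turns it into $\det\nabla u \equiv 1$ on all of $\Omega$, giving both implications at once. For the compressible identity I would insert the density into the definition of $\Phi_h$ and use $\phi_h(s) = h(s^{-1})\,s$:
\begin{equation*}
\Phi_h(u\#\Leb_\Omega) = \int_D \phi_h\!\left(\frac{\ddr u\#\Leb_\Omega}{\ddr\Leb_D}\right)\ddr\Leb_D = \int_D h\!\left(\det\nabla u \circ u^{-1}\right)\ddr u\#\Leb_\Omega .
\end{equation*}
Applying the push-forward change of variables once more, now with $\psi = h(\det\nabla u\circ u^{-1})$ so that $\psi\circ u = h(\det\nabla u)$, collapses this to $\int_\Omega h(\det\nabla u)\,\ddr\Leb_\Omega$, which is the asserted identity and recovers the formal computation of Section \ref{subsec_elasticity} with full rigor.
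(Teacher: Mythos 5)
Your proposal is correct and takes essentially the same route as the paper: the injectivity/surjectivity step is exactly Ciarlet's Theorem 5.5-2 (which you additionally sketch via the standard degree-theory argument, where the paper simply cites it), followed by identifying the density $\frac{\ddr u\#\Leb_\Omega}{\ddr \Leb_D} = (\det\nabla u)^{-1}\circ u^{-1}$ and the change-of-variables computation that the paper defers to Section \ref{subsec_elasticity}. One minor caution: $\det\nabla u$ is only assumed positive on $\Omega$, not on $\bar\Omega$, so absolute continuity of $u\#\Leb_\Omega$ should be justified by the local-diffeomorphism property (so that $u^{-1}$ is locally Lipschitz on $D$ and preimages of null sets are null) rather than by uniform positivity on the compact closure --- but this does not affect the rest of your argument.
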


\begin{proof}
If $u\in C^1(\bar{\Omega},\R^k)$ with $u=g$ on $\partial \Omega$ with $\det(\nabla u)>0$, then $u$ is injective and $u(\Omega)=D$ by Theorem 5.5-2 in \cite{ciarlet1988mathematical}.  The pushforward measure is given by $\frac{\ddr u\# \Leb_\Omega}{\ddr \Leb_D}  = |\det(\nabla u)|^{-1}\circ u^{-1}$, and the equivalences follow from the calculation of Section \ref{subsec_elasticity}.
\end{proof}

\subsection{Optimality conditions}

Now we turn to the derivation of the optimality condition. To that extent, we will rely on the notion of the subdifferential of $\Phi$ in the sense of convex analysis.

For any $\omega \in C(\bar{D})$, we define the Legendre transform of $\Phi$ as
\begin{equation*}
\Phi^*(\omega) = \sup_{\mu \in \mathcal{M}(\bar{D})} \, \int_D \omega \, \ddr \mu - \Phi(\mu).
\end{equation*}
 We say that $\omega \in C(\bar{D})$ belongs to the subdifferential of $\Phi$ at $\mu$, and we write $\omega \in \dr \Phi(\mu)$, if $\Phi^*(\omega) = \int \omega \, \ddr \mu - \Phi(\mu)$.

\begin{example}
\label{ex:compressible_pressure}
In the case where 
\begin{equation*}
\Phi_h( \mu ) = \int_D \phi_h\left(\frac{ \ddr \mu}{\ddr \Leb_D}\right) \, \ddr\Leb_D
\end{equation*}
with $\phi_h(s) = h(s^{-1})s$ for a smooth and convex $h$, and if $\mu$ has a smooth density w.r.t. $\Leb_D$ with $\frac{d\mu}{d\Leb_D} \geq \delta>0$ on $D$, then $\partial \Phi_h(\mu)$ consists of the single element
\begin{equation*}
\omega = \phi_h' \left( \frac{\ddr \mu}{\ddr \Leb_D} \right).
\end{equation*}
Written in terms of $u$ and $h$, if $u\in C^1(\bar{\Omega},\bar{D})$ smooth and invertible, then the subdifferential $\partial \Phi_h(u\#\Leb_\Omega)$ consists of the single element
$$
	\omega =\big(-h'(\det \nabla u)\det(\nabla u)+h(\det \nabla u)\big)\circ u^{-1}.
$$
\end{example}

\begin{example}
\label{ex:incompressible_pressure}
As another important case, let $\Phi_\text{inc}$ be defined by 
\begin{equation*}
\Phi_\text{inc}(\mu) = \begin{cases}
0 & \text{if } \mu = \Leb_D \\
+ \infty & \text{otherwise},
\end{cases}
\end{equation*}
then $\Phi^*_\text{inc}(\omega) = \int_D \omega\, \ddr \Leb_D$ and any $\omega \in C(\bar{D})$ belongs to $\dr \Phi_\text{inc}(\Leb_D)$. 
\end{example}

Now we claim that the strong form of the Euler Lagrange equations may be expressed as
\begin{equation}
\label{eq_Euler_Lagrange}
\begin{cases}
\nabla \cdot DW(\nabla u) -D F(u) = (\nabla \omega) \circ u \\
\omega \in \dr \Phi(u \# \Leb_\Omega),
\end{cases}
\end{equation} 
where $DW$ and $DF$ represent the differential with respect to their second argument, that is the one in $\R^{dk}$ and $\R^k$ respectively. The function $\omega \in \dr \Phi(u \# \Leb_\Omega)$ is an unknown: it can be interpreted as the Lagrange multiplier associated to the penalization by $\Phi(u \# \Leb_\Omega)$, or, in short, as a \emph{pressure}. Note that $\dr \Phi(u \# \Leb_\Omega) \neq \emptyset$ implies that $\Phi(u \# \Leb_\Omega) < + \infty$.  

\begin{definition}
Provided that $\nabla \omega$ is well defined, we say that \eqref{eq_Euler_Lagrange} holds weakly if for all $v \in W^{1,p}_0(\Omega, \R^k)$,
\begin{equation*}
\int_\Omega\Big( \nabla v : DW(\nabla u)+v\cdot \big(DF(u)+(\nabla \omega)\circ u\big)\Big)\ddr\Leb_\Omega=0.
\end{equation*}
\end{definition}

\noindent Later we will consider the case  $\omega$ is $\lambda$-convex where $\nabla \omega$ will be defined as a measurable selection of the subdifferential of $\omega$.

In the case of $\Phi=\Phi_h$ for $\omega \in \dr \Phi(u \# \Leb_\Omega)$ we have
$$
	(\nabla \omega)\circ u =-\det(\nabla u)\nabla u^{-\top}\nabla h'(\det \nabla u),
$$
which agrees with what one obtains through the direct variation of $u$ with integration by parts and the identity that $\nabla \cdot \det(\nabla u)\nabla u^{-\top}=0$.

\begin{remark}
In elasticity theory the equilibirum equations are not usually written in this way. Indeed if we introduce $ p = \omega \circ u$ and $S = DW(\nabla u)$ the second Piola-Kirchoff stress tensor then \eqref{eq_Euler_Lagrange} can be written 
\begin{equation*}
\nabla \cdot S -DF(u) =  \nabla u^{- \top} \nabla p \ \text{on}\ \Omega.
\end{equation*}
Alternatively, we introduce the Cauchy stress $T=S\, (\nabla u)^\top \circ u^{-1}$, in which case the equilibrium equations are simply
\begin{equation}
\label{eq_Euler_Lagrange_Cauchy}
	\nabla \cdot T - DF(u)\circ u^{-1}=\nabla \omega \ \text{on}\ D.
\end{equation}
The assumption of our main theorem will be about $\omega$, that is about \emph{the pressure in deformed configuration}. 
\end{remark}

Let us justify that \eqref{eq_Euler_Lagrange} are indeed the Euler Lagrange equations in the case where the solution is smooth. Indeed, the existence of a pressure is guaranteed when $u$ is sufficiently smooth, following the arguments of \cite{letallec1981existence}.  Results can also be attained for small forces as in \cite{le1985constitutive}.  In some cases this argument can be weakened to obtain a distributional solution; see the arguments of \cite{giaquinta1994weak}.

By common abuse of notation, in the proposition below and its proof we identify a measure on $D$ with its density w.r.t. $\Leb_D$. Moreover, we will say that $\Phi : \M_+(\bar{D}) \to \R \cup \{ + \infty \}$ is \emph{regular} if it is a convex, bounded from below, lower semi continuous function which coincides with its lower semi continuous envelope of when restricted to measures with a smooth density. Specifically, for every $\mu \in \M_+(\bar{D})$ with $\Phi(\mu) < + \infty$, we assume that there exists a sequence $(\mu_n)_{n \in \N}$ of measures in $W^{j,r}(D)$ with $jr > d$ such that $\Phi(\mu_n)$ converges to $\Phi(\mu)$ when $n \to + \infty$. If $\Phi$ is either $\Phi_{\text{inc}}$ or $\Phi_h$ for a smooth and convex $h$ then it is clearly regular.

\begin{prop}
Assume that $\Phi$ is regular. We suppose that $\dr \Omega, \dr D, g, W, F$ are smooth, and $u\in W_g^{1,p}(\Omega,D)$ is a local minimizer of $E(u)$ such that $0< u\# \Leb_\Omega \in W^{j+1,r}(D)$, the Cauchy stress satisfies $T\in W^{j,r}(D,\R^{dk})$, $DF(u)\circ u^{-1}\in W^{j-1,r}(D,\R^{k})$ for $j\in \mathbb{N}$, $1<r<+\infty$, and $rj>d$.  Then there exists $\omega\in W^{j,r}(D) \cap \dr \Phi(u\#\Leb_\Omega)$ such that \eqref{eq_Euler_Lagrange} holds weakly and \eqref{eq_Euler_Lagrange_Cauchy} holds strongly on $D$.
\end{prop}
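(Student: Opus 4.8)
The plan is to derive the optimality system by linearizing the energy around the local minimizer $u$, to produce the pressure $\omega$ as a Lagrange multiplier via convex duality, and finally to upgrade its regularity from the equation it satisfies.

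First I would compute the first variation along outer perturbations $u_\epsilon = u + \epsilon v$, with $v \in W^{1,p}_0(\Omega,\R^k)$ small enough that $u_\epsilon$ still takes values in $D$. Writing $\mu = u\#\Leb_\Omega$ and $\rho = \ddr\mu/\ddr\Leb_D > 0$, a direct computation together with the change of variables formula shows that $\epsilon \mapsto u_\epsilon\#\Leb_\Omega$ is differentiable at $\epsilon = 0$ with derivative the signed measure $\nu_v$ of density $-\nabla\cdot(\rho\, w)$, where $w = v\circ u^{-1}$. Since the elastic and potential terms are smooth, their variation is the linear functional $\ell(v) = \int_\Omega (\nabla v : DW(\nabla u) + v\cdot DF(u))\,\ddr\Leb_\Omega$. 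For the last term I would invoke the convexity of $\Phi$: along the differentiable curve $\epsilon\mapsto u_\epsilon\#\Leb_\Omega$ one has $\tfrac{d}{d\epsilon}\big|_{0^+}\Phi(u_\epsilon\#\Leb_\Omega) = D^+\Phi(\mu;\nu_v)$, the one-sided directional derivative (this chain rule for convex functionals along smooth curves is a standard but slightly technical lemma). Local minimality then yields, for every admissible $v$, \[ \ell(v) + D^+\Phi(\mu;\nu_v) \geq 0. \] The key observation is that the weak form \eqref{eq_Euler_Lagrange} is exactly the statement that $\ell(v) = -\langle \omega, \nu_v\rangle$ for some $\omega \in \partial\Phi(\mu)$, so everything reduces to producing such an $\omega$.

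The hard part will be the existence of this multiplier, and this is where the regularity of $\Phi$ and the positivity of $\rho$ enter. I would set $\Lambda v = \nu_v$, a linear map, and note that $v\mapsto \ell(v) + D^+\Phi(\mu;\Lambda v)$ is convex (a linear term plus the support function $D^+\Phi(\mu;\cdot)$ of $\partial\Phi(\mu)$ precomposed with a linear map) and is minimized at $v=0$; hence $0 \in \ell + \Lambda^*\partial\Phi(\mu)$ by the convex subdifferential sum rule, which furnishes $\omega\in\partial\Phi(\mu)$ with $\ell = -\Lambda^*\omega$. The qualification needed for the sum rule is the real obstacle: the range of $\Lambda$ must be large enough, which I would obtain by solving the weighted equation $-\nabla\cdot(\rho\, w) = h$ for every mean-zero $h$ (solvable since $\rho$ is smooth and bounded below), together with the approximation by smooth-density measures built into the regularity of $\Phi$. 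For a genuinely singular $\Phi$ such as $\Phi_{\text{inc}}$, whose domain is the single point $\Leb_D$, the directional derivative is $+\infty$ off the constraint and the sum rule degenerates; there I would argue \emph{à la} de Rham, using minimality only along constraint-preserving directions (those $w$ with $\nabla\cdot(\rho w)=0$) to conclude that $\ell$ vanishes on $\ker\Lambda$, so that $\ell$ factors through $\Lambda$ and defines $\omega$, which lies in $\partial\Phi(\Leb_D) = C(\bar D)$ automatically by Example \ref{ex:incompressible_pressure}. In the smooth compressible case $\partial\Phi_h(\mu)$ is the singleton of Example \ref{ex:compressible_pressure} and $\omega$ is simply read off.

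Next I would bootstrap regularity directly from the equation. Unravelling $\ell = -\Lambda^*\omega$ gives, in the distributional sense on $D$, an identity expressing $\nabla\omega$ through $\nabla\cdot T$, $DF(u)\circ u^{-1}$ and powers of $\rho$; after the Piola-transform bookkeeping this is precisely the Cauchy form \eqref{eq_Euler_Lagrange_Cauchy}. Its right-hand side lies in $W^{j-1,r}(D)$: one has $\nabla\cdot T\in W^{j-1,r}$ from $T\in W^{j,r}$, one has $DF(u)\circ u^{-1}\in W^{j-1,r}$ by hypothesis, and since $rj>d$ the space $W^{j,r}(D)$ is a Banach algebra, $\rho\in W^{j+1,r}$ with $\rho>0$ gives $1/\rho\in W^{j+1,r}$, and products of these spaces remain in $W^{j-1,r}$. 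Hence $\nabla\omega\in W^{j-1,r}$, that is $\omega\in W^{j,r}(D)$, and the embedding $W^{j,r}\hookrightarrow C(\bar D)$ is consistent with $\omega\in\partial\Phi(\mu)\subset C(\bar D)$. Finally, with $\omega$ now known to be $W^{j,r}$ I would close the loop: the pairing becomes genuine, $\langle\omega,\nu_v\rangle = \int_\Omega (\nabla\omega)\circ u\cdot v\,\ddr\Leb_\Omega$ by integration by parts, so $\ell(v) = -\int_\Omega(\nabla\omega)\circ u\cdot v\,\ddr\Leb_\Omega$ is exactly \eqref{eq_Euler_Lagrange} weakly; changing variables to $D$ and integrating by parts pointwise (now licit) gives \eqref{eq_Euler_Lagrange_Cauchy} strongly on $D$. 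I expect the multiplier-existence step to be the principal difficulty, precisely because $\Phi$ may be nonsmooth or infinite-valued and the constraint qualification must be extracted from its regularity and from $\rho>0$.
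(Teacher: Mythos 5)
Your overall skeleton (first variation, a multiplier obtained by factorizing $\ell$ through the weighted divergence operator $q \mapsto -\nabla\cdot(\rho\, q)$, then an elliptic/Sobolev-algebra bootstrap giving $\omega \in W^{j,r}(D)$) matches the paper's, and your final regularity step is essentially the paper's. The genuine gap is the step you yourself flag as ``standard but slightly technical'': the chain rule $\tfrac{\ddr}{\ddr\epsilon}\big|_{0^+}\Phi(u_\epsilon\#\Leb_\Omega) = D^+\Phi(\mu;\nu_v)$, and hence the premise that $v\mapsto \ell(v)+D^+\Phi(\mu;\Lambda v)$ is minimized at $v=0$, is not available for a convex functional that is merely lower semicontinuous and may take the value $+\infty$. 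The curve $\epsilon\mapsto u_\epsilon\#\Leb_\Omega$ agrees with the affine segment $\mu+\epsilon\nu_v$ only to first order, and convexity controls $\Phi$ only along \emph{exact} affine segments; off that segment $\Phi$ may equal $+\infty$ arbitrarily close to $\mu$. Local minimality therefore only yields $\ell(v)+\liminf_{\epsilon\to 0^+}\bigl(\Phi(u_\epsilon\#\Leb_\Omega)-\Phi(\mu)\bigr)/\epsilon \geqslant 0$, and this $\liminf$ can be $+\infty$ while $D^+\Phi(\mu;\nu_v)$ is finite, so the inequality you feed into the subdifferential sum rule cannot be derived this way. Your two fallbacks cover only the extreme cases: smooth everywhere-finite $\Phi_h$ (where direct differentiation works) and $\Phi_{\text{inc}}$ (where even the de Rham argument requires curves that preserve the constraint \emph{exactly}, e.g.\ the flow of a field $w$ with $\nabla\cdot(\rho w)=0$, not the additive perturbations $u+\epsilon\, w\circ u$ you start from, since those leave the constraint set and make $E$ identically $+\infty$ along the curve). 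But the proposition is stated for every regular $\Phi$, whose domain can be neither a single point nor open (for instance an integral functional whose density is constrained to an interval), and there neither fallback applies.

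The paper supplies exactly the missing device, in two stages. First it treats the image measure as a hard constraint: it minimizes $\tilde E(v)$ over $v$ with $(u+v\circ u)\#\Leb_\Omega=\mu$ \emph{exactly}, and obtains a multiplier $\omega\in W^{-j,r}(D)$ from the Banach-space Lagrange multiplier theorem; this is where the closed range of $q\mapsto -\nabla\cdot(q\,\mu)$ enters, and it is the rigorous counterpart of your de Rham factorization, valid for any $\Phi$ whatsoever because $\Phi$ plays no role at this stage. Second, to prove $\omega\in\dr\Phi(\mu)$, it argues by contradiction: if some $\nu$ violates the subdifferential inequality by $a>0$, regularity of $\Phi$ allows one to take $\nu$ with $W^{j,r}$ density, and the implicit function theorem (again via surjectivity of the linearized constraint) produces maps $q_t$ with $(u+q_t\circ u)\#\Leb_\Omega=(1-t)\mu+t\nu$ \emph{exactly}; along this curve $\Phi$ is evaluated only on the affine segment, where convexity gives the upper bound, and the derivative of the resulting upper bound on $E(u+q_t\circ u)$ at $t=0$ equals $-a<0$, contradicting local minimality. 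It is this exact realization of affine interpolations in measure space by admissible deformations --- not a chain rule or a sum-rule qualification --- that makes the argument work for all regular $\Phi$; this is the idea your proposal is missing.
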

\begin{proof}
Let $\mu = u \# \Leb_\Omega$, in particular $\mu \in W^{j+1,r}(D)$.

We start by proving the existence of the pressure $\omega$. We consider variations of the form $u + v \circ u$ for $v:D\rightarrow \R^k$ with $v=0$ on $\partial D$. We define $G : v \mapsto (u+ v \circ u ) \# \Leb_D$. We let 
	$$
		\tilde{E}(v)=\int_\Omega \big( W(\nabla (u+v\circ u)) + F(u + v \circ u) \big) \,  \ddr \Leb_\Omega.
	$$
	The problem to minimize $\tilde{E}(v)$ subject to $G(v)=\mu$ has a local minimum at $v=0$.
	Linearizing the constraint, we have for $z\in C^1(\bar{D})$,
	$$
		\langle z,DG(v)q\rangle =\frac{\ddr}{\ddr t}\int_D z\, \ddr(u+t\, q\circ u)\# \Leb_\Omega\Big|_{t=0}= \frac{d}{dt}\int_D z\circ (\cdot+t\, q)\, \ddr \mu \Big|_{t=0}=\int_D \nabla z\cdot q\, \ddr \mu.
	$$
	The map $DG(v):W^{j+1,r}_0(D)\rightarrow W^{j,r}(D)$, which can be expressed as
	$$
		DG(v)q = -\nabla \cdot \left(q\, \mu\right),
	$$ 
	is continuous, making $G$ $C^1$, and has closed range as a composition of the divergence operator and multiplication by a positive function in $W^{j+1,r}(D)$.  The range of $DG(v)$ consists of functions in $W^{j+1,r}(D)$ that integrate to zero.

	Thus by the Lagrange multiplier theorem there exists $\omega\in W^{-j,r}(D)$ with $\int_D \omega\, \ddr \mu=0$ such that
	$$
		0=D\tilde{E}(0)q + \langle \omega,DG(0)q\rangle
	$$
	for all $q\in W^{j+1,r}_0(D,\R^k)$; see \cite[\S 4.14 Proposition 1]{zeidler1995applied}  with $f(v)=\tilde{E}( v)$, $G(v)=G(v)$, $X = W^{j+1,r}_0(D,\R^k)$ and $Y=\{ z\in W^{j,r}(D);\ \int_D z\, d{\Leb}_D = 0\}$. 

	As $ D\tilde{E}(0)q= \int_D \left(-\nabla \cdot T+DF(u)\circ u^{-1} \right) \cdot q \, \ddr \mu$ and $\langle \omega,DG(0)q\rangle=\int \nabla \omega \cdot q \, \ddr \mu$ in the distribution sense, it then follows from $\mu >0$ on $D$ that $\nabla \omega  \in W^{j-1,r}(D,\R^k)$ 
	 thus $\omega \in W^{j,r}(D)\subset C(\bar{D})$ and \eqref{eq_Euler_Lagrange_Cauchy} holds. Changing variables back to $\Omega$ we have that \eqref{eq_Euler_Lagrange} holds, at least weakly, except it remains to check that $\omega\in \dr \Phi(\mu)$.

	We now suppose that $\omega$ is \emph{not} in the subdifferential of $\dr \Phi(\mu)$. Therefore there is $\nu\in \mathcal{M}(\bar{D})$ and $a>0$ such that
\begin{equation*}
\int_D \omega \, \ddr \nu - \Phi(\nu) \geqslant a + \int_D \omega \, \ddr \mu - \Phi(\mu) = a - \Phi(\mu). 
\end{equation*}	
By regularity of $\Phi$ and continuity of $\omega$, we can take $\nu$ in $W^{j,r}(D)$.
By convexity of $\Phi$ we deduce that for all $t\in [0,1]$,
	$$
\Phi((1-t) \mu + t \nu) \leqslant - at + \Phi(\mu) + t \int_D \omega \, \ddr \nu.  		
	$$
By the implicit function theorem and previous linearization argument (see \cite{zeidler1995applied} \S 4.8 Theorem 4.E with $F(w,v)=G(v)-w$, $Y = W^{j+1,r}_0(D,\R^k)$ and $X = Z =\{ z\in W^{j,r}(D);\ \int_D z\, d{\Leb}_D = 0\}$), we find $q_t\in W^{j+1,r}(D,\R^k)$ for sufficiently small $t$ such that $(u+q_t\circ u)\#\Leb_\Omega=t\nu+(1-t)\mu$ and $DG(0) \dot{q}_0 = \nu - \mu$ where $\dot{q}_0$ is the temporal derivative of $q$ evaluated at $t= 0$. We can write
\begin{equation*}
E(u + q_t \circ u)  = \tilde{E}(q_t) + \Phi((1-t) \mu + t \nu) 
 \leqslant \tilde{E}(q_t) - at + \Phi(\mu) + t \int_D \omega \, \ddr \nu, 
\end{equation*}
and there is equality if $t= 0$. The derivative of the right hand side at $t= 0$ is 
\begin{multline*}
\left.  \frac{\ddr}{\ddr t} \left( \tilde{E}(q_t) - at + \Phi(\mu) + t \int_D \omega \, \ddr \nu \right) \right|_{t= 0} = - a + D\tilde{E}(0) \dot{q}_0 + \int_D \omega \, \ddr \nu \\ 
= -a - \langle \omega,DG(0)\dot{q}_0 \rangle_{W^{j,q}(D)} + \int_D \omega \, \ddr \nu = - a - \int_D \omega \, \ddr (\nu - \mu) + \int_D \omega \, \ddr \nu  = - a.  
\end{multline*}	
Hence by taking $t$ small enough we see that $E(u + q_t \circ u) < E(u)$, which contradicts the local optimality of $u$.	
\end{proof}

\section{Global optimality}

Now let us turn to conditions guaranteeing that a solution of \eqref{eq_Euler_Lagrange} is a global minimizer of the problem. We start with an easy observation. 

\begin{prop}
\label{proposition_main}
Let $u \in W^{1,p}_{g}(\Omega, D)$ with $\Phi(u\#\Leb_\Omega)<+\infty$ and assume that there exists $\omega \in \dr \Phi(u \# \Leb_\Omega)$ such that $u$ is a (unique) global minimizer of 
\begin{equation}
\label{eq_auxiliary_problem}
v \mapsto \int_\Omega \Big(W(\nabla v) +F(v) + \omega(v)\Big)\ddr \Leb_\Omega
\end{equation}
over $W^{1,p}_{g}(\Omega,D)$. Then $u$ is a (unique) minimizer of the energy $E$ introduced in Definition \ref{def_energy}.
\end{prop}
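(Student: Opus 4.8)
The plan is to exploit the convexity of $\Phi$ through its subdifferential: the condition $\omega \in \dr\Phi(\mu)$, with $\mu := u\#\Leb_\Omega$, provides an affine lower bound for $\Phi$ that \emph{linearizes} the only nonconvex term of $E$. Once $\Phi$ is replaced by this linearization, the global minimization of $E$ collapses onto the auxiliary problem \eqref{eq_auxiliary_problem}, which $u$ is assumed to solve.

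First I would extract the subgradient inequality. By definition of the Legendre transform, $\Phi^*(\omega)\geq \int_D \omega\,\ddr\nu - \Phi(\nu)$ for every $\nu\in\M_+(\bar D)$, while $\omega\in\dr\Phi(\mu)$ means $\Phi^*(\omega)=\int_D \omega\,\ddr\mu - \Phi(\mu)$. Combining the two gives
\[
\Phi(\nu)\ \geq\ \Phi(\mu) + \int_D \omega\,\ddr(\nu-\mu)\qquad\text{for all } \nu\in\M_+(\bar D).
\]

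Next I would fix an arbitrary competitor $v\in W^{1,p}_g(\Omega,D)$ and apply this inequality with $\nu=v\#\Leb_\Omega$. Since $\omega\in C(\bar D)$, the change of variables formula recalled above yields $\int_D \omega\,\ddr(v\#\Leb_\Omega)=\int_\Omega \omega(v)\,\ddr\Leb_\Omega$ and $\int_D \omega\,\ddr\mu=\int_\Omega \omega(u)\,\ddr\Leb_\Omega$. Adding $\int_\Omega(W(\nabla v)+F(v))\,\ddr\Leb_\Omega$ to both sides of the subgradient inequality then produces
\[
E(v)\ \geq\ \int_\Omega\big(W(\nabla v)+F(v)+\omega(v)\big)\,\ddr\Leb_\Omega + \Phi(\mu) - \int_\Omega \omega(u)\,\ddr\Leb_\Omega .
\]
Finally I would use that $u$ minimizes \eqref{eq_auxiliary_problem}, so the first integral on the right is at least $\int_\Omega(W(\nabla u)+F(u)+\omega(u))\,\ddr\Leb_\Omega$; the $\omega(u)$ contributions cancel and what remains reassembles exactly into $\int_\Omega(W(\nabla u)+F(u))\,\ddr\Leb_\Omega+\Phi(\mu)=E(u)$. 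Hence $E(v)\geq E(u)$ for every admissible $v$. For the uniqueness statement, if $u$ is the \emph{unique} minimizer of \eqref{eq_auxiliary_problem}, then for $v\neq u$ the inequality coming from the auxiliary problem is strict, forcing $E(v)>E(u)$.

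I do not expect a genuine obstacle: the whole content is that the subdifferential furnishes a sharp affine minorant of $\Phi$ that is \emph{tight at $\mu$}, which is precisely what makes the $\Phi(\mu)$ terms reassemble into $E(u)$ rather than cancel. The only points deserving care are checking that the change of variables is licit (guaranteed by continuity of $\omega$ on the compact set $\bar D$, so that $\omega(v)$ is bounded and integrable for any admissible $v$) and applying the subgradient inequality in the direction that keeps $\Phi(\mu)$ on the lower-bounding side.
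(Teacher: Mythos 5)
Your proposal is correct and is essentially the paper's own argument: the paper likewise uses the Fenchel--Young inequality $\Phi(v\#\Leb_\Omega)\geq \int_D\omega\,\ddr(v\#\Leb_\Omega)-\Phi^*(\omega)$, the change of variables for the continuous function $\omega$, the minimality of $u$ for \eqref{eq_auxiliary_problem}, and the equality case of Fenchel--Young at $u\#\Leb_\Omega$; your only cosmetic difference is eliminating $\Phi^*(\omega)$ at the start by rewriting the subdifferential condition as the subgradient inequality, rather than carrying $\Phi^*(\omega)$ through the chain of inequalities.
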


\begin{proof}
Indeed, we can write by definition of $\dr \Phi(u \# \Leb_\Omega)$ that for any competitor $v$,
\begin{align}
E(v) & = \int_\Omega\Big( W(\nabla v) +F(v) \Big)\ddr \Leb_\Omega+ \Phi(v \# \Leb_\Omega)\nonumber \\
& \geqslant \int_\Omega\Big( W(\nabla v) +F(v)\Big)\ddr\Leb_\Omega + \int_D \omega \, \ddr (v \# \Leb_\Omega) - \Phi^*(\omega) \nonumber\\
& = \int_\Omega \Big(W(\nabla v) +F(v)+\omega(v)\Big)\ddr \Leb_\Omega  - \Phi^*(\omega) \nonumber\\
& \geqslant \int_\Omega \Big(W(\nabla u) +F(u)+\omega(u)\Big)\ddr \Leb_\Omega  - \Phi^*(\omega) \label{eqn:second_inequality}\\
& = \int_\Omega\Big( W(\nabla u) +F(u) \Big)\ddr \Leb_\Omega+ \Phi(u \# \Leb_\Omega) =E(u) \label{eqn:last_equality}
\end{align}
where the second inequality (\ref{eqn:second_inequality}) is the assumption on $u$ and the last equality (\ref{eqn:last_equality}) comes from $\omega \in \dr \Phi(u \# \Leb_\Omega)$. 
\end{proof}

\noindent Then we just notice that \eqref{eq_Euler_Lagrange} are also the Euler-Lagrange equations for \eqref{eq_auxiliary_problem}. Hence justifying global optimality for \eqref{eq_auxiliary_problem} is enough to yield global optimality of our original problem. The first result is that convexity of $\omega$ implies global optimality of an equilibrium $u$.

\begin{thm}
\label{theo_general}
Let $u \in W^{1,p}_{g}(\Omega, D)$ and assume that there exists $\omega \in \dr \Phi(u \# \Leb_\Omega)$ such that $\omega$ can be extended to a convex function on $\R^k$ and \eqref{eq_Euler_Lagrange} holds weakly (where $\nabla \omega$ can be any measurable selection of the subdifferential of $\omega$). Then $u$ is a global minimizer of the energy $E$ defined in \eqref{def_energy}. Moreover, if $\omega$, $F$, or $W$ is strictly convex, then $u$ is the unique global minimizer of $E$.
\end{thm}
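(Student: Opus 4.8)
The plan is to reduce everything to Proposition \ref{proposition_main}: it suffices to show that $u$ is a (unique) global minimizer over $W^{1,p}_{g}(\Omega,D)$ of the auxiliary functional
$$
J(v) = \int_\Omega \Big( W(\nabla v) + F(v) + \omega(v) \Big) \, \ddr \Leb_\Omega ,
$$
because \eqref{eq_Euler_Lagrange} is exactly the weak Euler--Lagrange equation of $J$, and because $\omega \in \dr \Phi(u\#\Leb_\Omega)$ forces $\Phi(u\#\Leb_\Omega)<+\infty$ as required by that proposition. The conceptual point is that $J$ is \emph{convex}: $W(x,\cdot)$ is convex and $\nabla$ is linear, $F(x,\cdot)$ is convex, and the hypothesis that $\omega$ extends to a convex function on $\R^k$ makes $v \mapsto \omega(v(x))$ convex. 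A weak critical point of a convex functional should then be a global minimizer, which is all we need.

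To make this quantitative I would, for an arbitrary competitor $v \in W^{1,p}_{g}(\Omega,D)$, write the three pointwise subgradient inequalities valid for a.e. $x\in\Omega$: $W(\nabla v) \geq W(\nabla u) + DW(\nabla u):(\nabla v - \nabla u)$, then $F(v) \geq F(u) + DF(u)\cdot(v-u)$, and finally $\omega(v) \geq \omega(u) + \big((\nabla\omega)\circ u\big)\cdot(v-u)$, where in the last line $(\nabla\omega)\circ u$ is chosen to be precisely the measurable subgradient selection appearing in the weak form of \eqref{eq_Euler_Lagrange}. Summing and integrating gives
$$
J(v) - J(u) \geq \int_\Omega \Big( DW(\nabla u):\nabla(v-u) + \big(DF(u) + (\nabla\omega)\circ u\big)\cdot(v-u) \Big) \, \ddr \Leb_\Omega .
$$
Since $u$ and $v$ share the boundary datum $g$, the test function $\xi = v-u$ lies in $W^{1,p}_0(\Omega,\R^k)$, and the right-hand side is exactly the weak Euler--Lagrange pairing, which vanishes. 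Hence $J(v) \geq J(u)$, and Proposition \ref{proposition_main} then gives that $u$ minimizes $E$.

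For the uniqueness claim I would track when these inequalities are strict. If $\omega$ or $F$ is strictly convex, the corresponding inequality is strict on the set $\{v \neq u\}$, so $v\neq u$ on a set of positive measure already forces $J(v)>J(u)$. If instead $W$ is strictly convex, the first inequality is strict on $\{\nabla v \neq \nabla u\}$; and in the complementary case $\nabla v = \nabla u$ a.e. the difference $v-u$ is constant and, having zero trace on $\dr\Omega$, must vanish. In all three cases $v\neq u$ yields $J(v)>J(u)$, so $u$ is the unique minimizer of $J$ and hence of $E$.

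The argument is essentially routine once convexity is recognized; the points that require care, and which I would treat as the main (minor) obstacles, are twofold. First, one must check that the subgradient inequality for $\omega$ is written with the \emph{same} measurable selection $(\nabla\omega)\circ u$ that enters the weak Euler--Lagrange equation, so the two expressions match term by term. Second, one must justify the integrability needed to add and integrate the three inequalities: here $\omega$ is finite and convex on $\R^k$, hence locally Lipschitz, so $\omega(v)$ is bounded on the compact set $\bar D$ and $(\nabla\omega)\circ u$ is bounded on $\Omega$, while the growth bound on $W$ and the integrability hypothesis on $F$ control the remaining terms, making every integral well-defined.
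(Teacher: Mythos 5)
Your proposal is correct and follows exactly the paper's route: reduce to Proposition \ref{proposition_main} and observe that the auxiliary functional \eqref{eq_auxiliary_problem} is convex (strictly convex in the respective cases), so that a weak solution of its Euler--Lagrange equation \eqref{eq_Euler_Lagrange} is its (unique) global minimizer. The paper states this in one sentence; you have merely written out the standard subgradient-inequality details, the matching of the measurable selection $(\nabla\omega)\circ u$, and the integrability checks that the paper leaves implicit.
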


\begin{proof}
We just use Proposition \ref{proposition_main} by noticing that \eqref{eq_auxiliary_problem} is a convex (respectively, strictly convex) problem provided that $\omega$ is convex (respectively,  one of $\omega$, $F$, or $W$ is strictly convex), and that \eqref{eq_Euler_Lagrange} are the Euler Lagrange equation for \eqref{eq_auxiliary_problem}. Indeed, a solution of the Euler-Lagrange equations of a convex (respectively, strictly convex) problem is a global minimizer (respectively, the unique global minimizer). 
\end{proof}

\begin{remark}
In the case where $W(C)=\frac{1}{2}|C|^2$ is quadratic, $\Phi = \Phi_\text{inc}$ while $F = 0$ then for small $u$ the linearized version of our Problem \eqref{eq_informal} are nothing else than the Stokes equations which read
\begin{equation*}
\begin{cases}
\Delta u = \nabla p & \text{in } \Omega \\
\nabla \cdot u = 0 & \text{in } \Omega \\
u = g & \text{on } \dr \Omega,
\end{cases}
\end{equation*}
and to make the link with our notation we would take $p = \omega$. As $p$ is harmonic if it is not constant then it cannot be convex. In particular, Theorem \ref{theo_general} will not apply in a situation without exterior forces, at least for the linearized problem. 
\end{remark}

If we restrict to uniformly convex energies, we can relax the convexity assumption on the pressure. We recall that a function $f : \R^k \to \R$ is $\lambda$ convex if and only if $y \mapsto f(y) + \frac{\lambda}{2} |y|^2$ is a convex function.

\begin{thm}
\label{theo_dirichlet}
Assume that $W$ is $\lambda_W$ convex and $F$ is $\lambda_F$ convex. Let $\lambda_1(\Omega) > 0$ the first eigenvalue of the Dirichlet Laplacian on $\Omega$.

Let $u \in W^{1,p}_g(\Omega, D)$ for $p\geq 2$ and assume that there exists $\omega \in \dr \Phi(u \# \Leb_\Omega)$ such that $\omega$ can be extended in a $\lambda$-convex function on $\R^k$ with $\lambda \geqslant - \lambda_W \lambda_1(\Omega)-\lambda_F$ and \eqref{eq_Euler_Lagrange} holds weakly (where $\nabla \omega$ can be any measurable selection of the subdifferential of $\omega$).
Then $u$ is the unique minimizer of the energy $E$ introduced in Definition \ref{def_energy}. Moreover, if $\lambda > - \lambda_W \lambda_1(\Omega)-\lambda_F$, $u$ is the unique minimizer of $E$.
\end{thm}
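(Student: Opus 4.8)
The plan is to reproduce the reduction used for Theorem~\ref{theo_general}: by Proposition~\ref{proposition_main} it is enough to prove that $u$ is a global minimizer (and the unique one in the strict case) of the auxiliary functional
\begin{equation*}
J(v) = \int_\Omega \Big( W(\nabla v) + F(v) + \omega(v) \Big)\ddr \Leb_\Omega
\end{equation*}
over $W^{1,p}_g(\Omega,D)$, whose weak Euler--Lagrange equations are exactly \eqref{eq_Euler_Lagrange}. The new difficulty, compared with Theorem~\ref{theo_general}, is that $J$ need no longer be convex: $\omega$ is only $\lambda$-convex with $\lambda$ possibly positive, and its convexity deficit has to be compensated by the uniform convexity of $W$ through the Dirichlet eigenvalue of $\Omega$. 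I would therefore not argue ``critical point of a convex problem hence global minimizer'', but instead establish global minimality quantitatively.

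First I would record the elementary semiconvexity inequality: if $f$ is $\mu$-convex then $f + \tfrac{\mu}{2}|\cdot|^2$ is convex, and for the corresponding measurable subgradient $\nabla f$ one has, for all admissible $y,y'$,
\begin{equation*}
f(y') \geqslant f(y) + \nabla f(y)\cdot(y'-y) - \frac{\mu}{2}|y'-y|^2 .
\end{equation*}
Applying this to $W$ with $\mu = \lambda_W$ (at $\nabla u, \nabla v$), to $F$ with $\mu = \lambda_F$, and to $\omega$ with $\mu = \lambda$ --- using for $\omega$ the same measurable selection of its subdifferential that enters \eqref{eq_Euler_Lagrange} --- then summing and integrating over $\Omega$, I obtain for every competitor $v$, writing $w = v - u \in W^{1,p}_0(\Omega,\R^k)$,
\begin{equation*}
J(v) - J(u) \geqslant \int_\Omega \Big( DW(\nabla u):\nabla w + \big( DF(u) + (\nabla\omega)\circ u \big)\cdot w \Big)\ddr\Leb_\Omega - \frac{\lambda_W}{2}\int_\Omega |\nabla w|^2 \ddr\Leb_\Omega - \frac{\lambda_F + \lambda}{2}\int_\Omega |w|^2 \ddr\Leb_\Omega .
\end{equation*}
Since $u$ and $v$ share the boundary datum $g$ we have $w \in W^{1,p}_0$, so the first integral is the weak form of \eqref{eq_Euler_Lagrange} tested against $w$ and vanishes.

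It remains to control the quadratic remainder. As $p \geqslant 2$ and $\Omega$ is bounded, $w \in W^{1,2}_0(\Omega,\R^k)$, so the variational characterization of the first Dirichlet eigenvalue gives $\int_\Omega |\nabla w|^2 \geqslant \lambda_1(\Omega)\int_\Omega |w|^2$ (componentwise and summed). Because $W$ is convex its modulus satisfies $\lambda_W \leqslant 0$, so $-\lambda_W/2 \geqslant 0$ and the eigenvalue inequality may be inserted in the gradient term, giving
\begin{equation*}
J(v) - J(u) \geqslant -\frac{1}{2}\Big( \lambda_W \lambda_1(\Omega) + \lambda_F + \lambda \Big)\int_\Omega |w|^2 \ddr \Leb_\Omega .
\end{equation*}
The smallness assumption on $\lambda$ is precisely the condition under which the prefactor $-\tfrac12(\lambda_W\lambda_1(\Omega)+\lambda_F+\lambda)$ is nonnegative, whence $J(v)\geqslant J(u)$ and $u$ is a global minimizer; when the assumption is strict the prefactor is strictly positive, so $J(v)=J(u)$ forces $w=0$, i.e. $v=u$, yielding uniqueness. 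Proposition~\ref{proposition_main} then transfers both conclusions to the original energy $E$.

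The step I expect to be the main obstacle is making the pressure contribution rigorous and checking the first-order term truly vanishes: one must verify that the measurable selection of $\dr\omega$ used in the semiconvexity inequality is the one appearing in the weak Euler--Lagrange equation, that $\big((\nabla\omega)\circ u\big)\cdot w$ and the quadratic integrals are finite (this is where $p\geqslant 2$ is genuinely used, to place $w$ in $W^{1,2}_0$), and that $\omega$ evaluated at the $D$-valued maps $u,v$ is handled consistently with its $\lambda$-convex extension to $\R^k$. The remaining subtlety is the sign bookkeeping in the eigenvalue step: the inequality $\int|\nabla w|^2 \geqslant \lambda_1(\Omega)\int|w|^2$ can only be inserted because convexity of $W$ forces the coefficient $-\lambda_W/2$ to be nonnegative, and this is exactly the mechanism by which uniform convexity of the stored energy buys room for a non-convex pressure.
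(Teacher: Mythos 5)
Your proof is correct and follows essentially the same route as the paper's: reduction to the auxiliary functional via Proposition \ref{proposition_main}, semiconvexity expansions of $W$, $F$ and $\omega$ around $u$, cancellation of the first-order term by testing the weak Euler--Lagrange equation with $w=v-u$, and the Poincar\'e inequality for the Dirichlet eigenvalue to absorb the gradient term into the $L^2$ term. The only discrepancy is one of sign convention: you adopt the paper's literal definition of $\lambda$-convexity (namely $f+\tfrac{\lambda}{2}|\cdot|^2$ convex), under which your prefactor is nonnegative exactly when $\lambda\leqslant-\lambda_W\lambda_1(\Omega)-\lambda_F$, whereas the theorem's stated inequality $\lambda\geqslant-\lambda_W\lambda_1(\Omega)-\lambda_F$ and the paper's own proof implicitly use the opposite convention ($f-\tfrac{\lambda}{2}|\cdot|^2$ convex, so that uniformly convex $W$ has $\lambda_W>0$); your argument is internally consistent once the hypothesis is read, as you do, as a smallness (upper) bound on $\lambda$.
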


\begin{proof}
Thanks to Proposition \ref{proposition_main}, we just need to study the problem \eqref{eq_auxiliary_problem}. We know that the function $W$ is $\lambda_W$-convex. Combining this with the definition of $\lambda_1(\Omega)$, it is clear that for any $v$ belonging to $W^{1,p}_g(\Omega,D)$ (in particular $u-v$ vanishes on $\dr \Omega$)
\begin{align*}
\int_\Omega \Big( W( \nabla v) & +F(v) \Big)  - \int_\Omega \Big( W( \nabla u)-F(u) \Big)\ddr \Leb_\Omega\\
\geqslant&\  \int_\Omega \Big(-\nabla \cdot (DW(\nabla u)) (v-u) +DF(u)\cdot (v-u)+\frac{\lambda_F}{2}|v-u|^2+ \frac{\lambda_W}{2} |\nabla v - \nabla u|^2\Big)\ddr \Leb_\Omega \\
\geqslant&\  \int_\Omega \Big(-\nabla \cdot (DW(\nabla u)) (v-u) +DF(u)\cdot (v-u)+ \frac{\lambda_F+\lambda_W \lambda_1(\Omega)}{2} |v - u|^2\Big)\ddr \Leb_\Omega.  
\end{align*} 
Hence by adding the functional $\int \omega(v) d\Leb_\Omega$ we see that the problem \eqref{eq_auxiliary_problem} is $(\lambda_F+\lambda_W \lambda_1(\Omega) + \lambda)$ convex, which yields global optimality (if $\lambda \geqslant - \lambda_W \lambda_1(\Omega) - \lambda_F$) and uniqueness (if $\lambda > - \lambda_W \lambda_1(\Omega) - \lambda_F$) for solutions of the Euler-Lagrange equations \eqref{eq_Euler_Lagrange}.
\end{proof}

As a corollary, we deduce a local optimality result. 

\begin{cor}
\label{cor:local_optimality}
Assume that $W$ is $\lambda_W$-convex with $\lambda_W > 0$ and that $\Phi$ is either $\Phi_{\text{inc}}$ or $\Phi_h$. We restrict to the case $\Omega = D$. 

Let $(u, \omega)$ be a smooth solution of the equilibirum equations \eqref{eq_Euler_Lagrange} with $u$ being one to one. Then there exists $r > 0$ such that for every $\tilde{\Omega}$ smooth connected subset of $\Omega$ of diameter bounded by $r$, the function $\left. u \right|_{\tilde{\Omega}}$ is the global maximizer of \eqref{eq_problem_main} with source domain $\tilde{\Omega}$, target domain $u(\tilde{\Omega})$ and boundary conditions $\left. u \right|_{\dr \tilde{\Omega}}$. 
\end{cor}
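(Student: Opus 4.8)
The plan is to reduce the statement to a single application of Theorem \ref{theo_dirichlet} on the small domain $\tilde{\Omega}$, with target $\tilde{D} := u(\tilde{\Omega})$ and boundary datum $u|_{\dr\tilde{\Omega}}$. Since $u$ is smooth and one-to-one with positive Jacobian, $\tilde{D}$ is again a smooth connected domain, and $u|_{\tilde{\Omega}}$ is an admissible competitor; being smooth it lies in $W^{1,p}$ for every finite $p$, so the requirement $p\geq 2$ in Theorem \ref{theo_dirichlet} is harmless. The system \eqref{eq_Euler_Lagrange} is purely local---a PDE on $\Omega$ together with a pointwise subdifferential condition---so its restriction holds on $\tilde{\Omega}$: the first equation is inherited verbatim, and for the second I would check that $\omega|_{\tilde{D}} \in \dr\Phi\big(u|_{\tilde{\Omega}}\#\Leb_{\tilde{\Omega}}\big)$ for the localized functional. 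For $\Phi_{\text{inc}}$ this is automatic, since $\omega \in \dr\Phi_{\text{inc}}(u\#\Leb_\Omega)$ forces $\det\nabla u \equiv 1$, hence $u|_{\tilde{\Omega}}\#\Leb_{\tilde{\Omega}} = \Leb_{\tilde{D}}$, and any continuous function belongs to the subdifferential there by Example \ref{ex:incompressible_pressure}; for $\Phi_h$ the subdifferential is the single pointwise element of Example \ref{ex:compressible_pressure}, which depends only on $\det\nabla u$ and so restricts correctly. Finally $F$ is convex, so we may take $\lambda_F = 0$.

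With this reduction in place, Theorem \ref{theo_dirichlet} delivers the claimed global optimality as soon as we exhibit some $\lambda$ and a $\lambda$-convex extension to $\R^k$ of $\omega|_{\tilde{D}}$ satisfying
\[
\lambda \ \geq\ -\lambda_W\,\lambda_1(\tilde{\Omega}).
\]
Two facts make this possible for $\tilde{\Omega}$ small. First, $\lambda_1(\tilde{\Omega})$ blows up: a set of diameter at most $r$ is contained in a ball of radius $r$, so domain monotonicity gives $\lambda_1(\tilde{\Omega}) \geq \lambda_1(B_r) = \lambda_1(B_1)/r^2 \to +\infty$ as $r \to 0$, and it is precisely the hypothesis $\lambda_W > 0$ that then drives the right-hand side to $-\infty$. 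Second, I claim $\omega|_{\tilde{D}}$ admits a globally $\lambda$-convex extension whose constant $\lambda$ is bounded \emph{uniformly} over all admissible $\tilde{\Omega}$ of diameter below a fixed threshold. Granting the claim, it suffices to choose $r$ so small that $-\lambda_W\,\lambda_1(\tilde{\Omega})$ drops below this fixed $\lambda$; then the hypothesis of Theorem \ref{theo_dirichlet} holds, in fact strictly, and $u|_{\tilde{\Omega}}$ is the unique global minimizer of the localized instance of \eqref{eq_problem_main}.

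The crux is the uniform extension claim, and it is genuinely delicate because $\tilde{D} = u(\tilde{\Omega})$ need not be convex: a pointwise bound $D^2\omega \geq -\lambda_\omega\,\Id$ on $\tilde{D}$ alone does not yield a function that is $\lambda$-convex on all of $\R^k$. I would argue as follows. As $\omega$ is smooth on the compact set $\bar{D}$, extend it to $\tilde{\omega} \in C^\infty(\R^k)$ and fix $\lambda_\omega$ with $D^2\tilde{\omega} \geq -\lambda_\omega\,\Id$ on a fixed large ball containing $\bar{D}$. Write $\rho \sim \mathrm{diam}(\tilde{D}) \lesssim r$, pick a base point $y_0 \in \tilde{D}$, and let $P$ be the second-order Taylor polynomial of $\tilde{\omega}$ at $y_0$; being a quadratic with Hessian $D^2\tilde{\omega}(y_0) \geq -\lambda_\omega\,\Id$, $P$ is globally $\lambda_\omega$-convex. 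Patch the two with a cutoff $\chi$ equal to $1$ on a ball $B(y_0,C\rho) \supseteq \bar{\tilde{D}}$ and supported in $B(y_0,2C\rho)$, setting $\hat{\omega} = \chi\,\tilde{\omega} + (1-\chi)P$. Then $\hat{\omega} = \omega$ on $\tilde{D}$ and $\hat{\omega} = P$ away from $y_0$, while on the transition annulus Taylor's theorem gives $|\tilde{\omega} - P| = O(\rho^3)$, $|\nabla(\tilde{\omega}-P)| = O(\rho^2)$ and $|D^2(\tilde{\omega}-P)| = O(\rho)$, against $|\nabla\chi| = O(\rho^{-1})$ and $|D^2\chi| = O(\rho^{-2})$; expanding $D^2\hat{\omega} = D^2 P + D^2[\chi(\tilde{\omega}-P)]$, every term of the correction is $O(\rho)$, so $D^2\hat{\omega} \geq -(\lambda_\omega + C'\rho)\,\Id$ globally. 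Hence $\hat{\omega}$ is $\lambda$-convex on $\R^k$ with $\lambda = \lambda_\omega + C'\rho \leq \lambda_\omega + C''r$, a bound independent of $\tilde{\Omega}$. This proves the claim and, with the previous paragraph, the corollary; the non-convexity of the deformed images is the only real difficulty.
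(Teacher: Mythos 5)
Your proof is correct and follows the same route as the paper's own (two-sentence) proof: restrict the subdifferential membership of the pressure to the localized problem (automatic for $\Phi_{\text{inc}}$, via the pointwise formula of Example \ref{ex:compressible_pressure} for $\Phi_h$), take $\lambda_F=0$ since $F$ is convex, invoke Theorem \ref{theo_dirichlet}, and use that $\lambda_1(\tilde{\Omega})\to+\infty$ as $\mathrm{diam}(\tilde{\Omega})\to 0$ together with $\lambda_W>0$. Where you go beyond the paper is the uniform extension claim: the paper settles the $\lambda$-convexity hypothesis with the remark that ``every smooth function is locally $\lambda$-convex for some finite $\lambda$'', which implicitly assumes that a Hessian lower bound of $\omega$ on $u(\tilde{\Omega})$ yields a $\lambda$-convex extension to all of $\R^k$ with a comparable, $\tilde{\Omega}$-independent constant. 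Since $u(\tilde{\Omega})$ need not be convex, that is not literally immediate, and your patching construction --- second-order Taylor polynomial at a base point, cutoff on an annulus of width $\sim\rho$, with the $O(\rho^{3})$, $O(\rho^{2})$, $O(\rho)$ Taylor remainders beating the $O(\rho^{-1})$, $O(\rho^{-2})$ cutoff derivatives to give a global Hessian bound $-(\lambda_\omega+C'\rho)\,\Id$ --- supplies exactly the uniform bound the paper takes for granted. So your write-up is not a different proof but a strictly more complete version of the paper's argument, filling its one real gap.
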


\begin{proof}
With the assumption that $\Phi$ is either $\Phiinc$ or an integral function of the density, one can notice that $\left. \omega \right|_{\tilde{\Omega}} \in \dr \Phi( \left. u \right|_{\tilde{\Omega}} \# \Leb_{\tilde{\Omega}}  )$ provided that $\omega \in \dr \Phi(u \# \Leb_\Omega)$. 

Thus, to apply Theorem \ref{theo_dirichlet}, we just need to notice that $\lambda_1(\tilde{\Omega})$ goes to $+ \infty$ when the diameter of $\tilde{\Omega}$ goes to $0$, and that every smooth function is locally $\lambda$-convex for some finite $\lambda$.   
\end{proof}

Corollary \ref{cor:local_optimality} highlights that, to prove that a smooth solution of the equilibrium equations is \emph{not} a global minimizer, one cannot rely only on a local perturbation. To build a competitor with smaller energy than a critical point, the global shape of $\Omega$ and/or $D$ must be used.

\section{Examples}
\label{section_examples}

\subsection{Affine deformations}

Assume that $u : \Omega \to D$ is an affine mapping and that $\Phi$ is either $\Phiinc$ or an integral function of the density $\Phi_h$, and $F =0$. Then it is clear that $\dr \Phi(u \# \Leb_\Omega)$ contains a constant function $\omega$. As $u$ clearly satisfies
\begin{equation*}
\nabla \cdot ( DW(\nabla u) ) = (\nabla \omega) \circ u = 0,
\end{equation*}
and that $\omega$ is convex, we can apply Theorem \ref{theo_general}. 

In conclusion, \emph{if $u$ is an affine mapping, then it is a global minimizer of the Problem \eqref{eq_problem_main} with boundary conditions $\left. u \right|_{\dr \Omega}$}. Moreover, if $W$ is strictly convex, it is the unique global minimizer. 

In \cite{knops1986quasiconvexity} a stronger result is proved with an additional assumption on $\Omega$: namely that under the assumption that $\Omega$ is star-shaped and $D$ has the same dimension than $\Omega$, any solution of the equilibrium equations \eqref{eq_Euler_Lagrange} with affine boundary conditions is an affine map. Moreover, the authors can allow for $W$ to be any quasiconvex function.

\subsection{Perturbation of the identity with exterior force}
For simplicity let $W$ be quadratic and  we restrict to $\Phi$ to be $\Phiinc$ the incompressibility constraint. We take $\Omega=D$ and consider the boundary conditions imposed by $g_\epsilon(x)=x+\epsilon g$ for $g\in C^{3,\alpha}(\bar\Omega)$. We take $F(u) = -\nabla \psi \cdot u$ for $\psi\in C^{3,\alpha}(\bar\Omega)$, which is strictly convex.  Then the identity is a global minimizer for $\epsilon=0$ and the pressure is given by $\omega(y) = \psi(y)$.
By the implicit function theorem and analysis of the linearized Stokes equation, 
we conclude that for some $\epsilon_1>0$ there exists a continuous path of solution $u_\epsilon\in C^{3,\alpha}(\Omega,\R^k)$ and $\omega_\epsilon\in C^{2,\alpha}(\Omega,\R^k)$ for $\epsilon\in [0,\epsilon_1)$.  In particular we can select $\epsilon_1>0$ so that $\omega_\epsilon$ remains strictly convex and thus Theorem \ref{theo_general} implies $u_\epsilon$ remains the unique global minimum.  The recent results of \cite{healey2019classical}  show that the local argument can be extended globally by means of a topological degree. The solutions remain a global minimum until $\omega_\epsilon$ loses the convexity imparted by $\psi$. 

\subsection{Pure torsion of a cylinder}

In this example let's take for simplicity $W$ quadratic and (not for simplicity) we restrict to $\Phi$ to be $\Phiinc$ the incompressibility constraint. Let's take $\Omega = D = B(0,1) \times [0,1] \subset \R^3$ a cylinder. A point $x \in \R^3$ will be written $x = (x_h, z) \in \R^2 \times \R$. We will denote by $R_\theta : \R^2 \to \R^2$ the rotation by an angle $\theta$. 

Let $a$ be a parameter. We consider the mapping $u_a : \Omega \to \Omega$ defined by 
\begin{equation*}
u_a(x_h, z) = \begin{pmatrix}
R_{a z} x_h \\
z
\end{pmatrix},
\end{equation*}
that is each horizontal slice is rotated by an angle proportional to $z$. This mapping always satisfies $u \# \Leb_\Omega = \Leb_\Omega$. Moreover, a straightforward computation leads to 
\begin{equation*}
\Delta u_a (x_h, z) = \begin{pmatrix}
- a^2 R_{a z} x_h \\
0 
\end{pmatrix} = (\nabla \omega_a) \circ u_a (x_h,z)
\end{equation*}
provided we define $\omega_a(x_h, z) = - \frac{a^2}{2} |x_h|^2$. In other words, $u_a$ satisfies the equilibrium equations \eqref{eq_Euler_Lagrange}. This is not a surprise, actually $u_a$ satisfies the equilibrium equations for much more general $W$ as it is a well undestood result in elasticity theory, see for instance \cite[Section 3.3]{green1992theoretical}. We make the following observations. 
\begin{itemize}
\item[•] If $a$ is small enough, than $u_a$ is the unique global minimizer of \eqref{eq_problem_main} with boundary conditions $\left. u_a \right|_{\dr \Omega}$. This is a direct consequence of Theorem \ref{theo_dirichlet}. 
\item[•] If $a$ is large enough, than $u_a$ is \emph{not} a global minimizer of the auxiliary problem \eqref{eq_auxiliary_problem} with boundary conditions $\left. g=u_a \right|_{\dr \Omega}$. Indeed, for any displacement $q\in W^{1,2}_0(\Omega ,\R^3)$ with nontrivial horizontal component, $q_h$, the auxiliary energy of $u_a+\epsilon q$ will decrease for sufficiently large $a$ as
\begin{align*}
	&\ \int_\Omega\Big( \frac{1}{2}\big| \nabla u_a+\epsilon \nabla q\big|^2+\omega_a\big(u_a+\epsilon q\big)\Big)\ddr \Leb_\Omega-\int_\Omega\Big( \frac{1}{2}\big| \nabla u_{a}\big|^2+\omega_a\big(u_{a}\big)\Big)\ddr \Leb_\Omega\\
	=&\ \epsilon^2\int_\Omega\Big( \frac{1}{2}\big| \nabla q \big|^2-\frac{a^2}{2}\big|  q_h\big|^2\Big)\ddr \Leb_\Omega,
\end{align*}
where the cancellation of the cross terms occurred as $u_a$ is a solution of the equilibrium equation \eqref{eq_Euler_Lagrange}. For sufficiently small $\epsilon$, if $u$ is bounded then $u_a+\epsilon q\in W^{1,2}_g(\Omega,D)$. 

\item[•] It remains unclear whether there are any other solutions to the equilibrium equations \eqref{eq_Euler_Lagrange} and if $u_a$ is the global minimum of Problem \eqref{eq_auxiliary_problem} with boundary conditions $\left. u_a \right|_{\dr \Omega}$ for all $a$. 
\end{itemize} 

\subsection{Multiple local minima}

Our result provides global minimality but does not prevent the existence of local minimizers or other equilibria. In particular, in \cite{Post1997homotopy} the following example is studied. 

The authors take $\Omega = D = \{ x \in \R^2 \ : \ 0<r_1 \leqslant |x| \leqslant r_2 \} \subset \R^2$ an annulus and as boundary conditions $g$ they take the identity. The energy $\Phi$ is $\Phiinc$ enforcing the incompressibility condition. The identity map is the unique global minimizer of \eqref{eq_problem_main}. However, if $\mathcal{S}_N \subset W^{1,p}_g(\Omega,D)$ is the set of mappings $u \in W^{1,p}_g(\Omega, D)$ such that a.e. radius is mapped by $u$ to a curve of index $N$, then $\mathcal{S}_N$ is connected for the $W^{1,p}(\Omega,D)$ topology. In particular, the energy $E$ admits a local minimizer on every $\mathcal{S}_N$.   

\section{A convex relaxation}

The proofs of Theorems \ref{theo_general} and \ref{theo_dirichlet} rely on convexity arguments: the problem \eqref{eq_auxiliary_problem} is convex under some assumptions on $W$ and the pressure $\omega$. However, the problem \eqref{eq_auxiliary_problem} depends on $\omega$, that is on the solution. In this section, we present a natural convex relaxation of the original problem which can be formulated in a very general context. Then, under the assumption of $\lambda$ convexity on $\omega$, we show that this convex relaxation is tight.  

Our result is reminiscent of Brenier's works (for instance \cite{Brenier1989, Brenier2018}), where he did something similar: taking a non convex problem, formulating a convex relaxation, and finding assumptions on solutions of the original problem guaranteeing that they are also solutions of the relaxed problem. 

Our relaxation relies on a definition of Dirichlet energy for measure valued mappings proposed by Brenier 20 years ago \cite{Brenier2003} and investigated more recently by the third author \cite{Lavenant2017harmonic}. 

An alternative convex relaxation of the problem has been given in \cite{AwiGangbo2014} that uses a measure on the set $\Omega\times D \times \R^+ \times \R^{dk}$, where the last two arguments correspond to distributions for $\det(\nabla u)$ and $\nabla u$.  The dual problem they derive results in an unknown $k$, that relates to the Legendre transform of $\omega$ in our work.  In the convex formulation we present here, the pressure of the deformed configuration, $\omega$, appears in a manner entirely analogous to the dual Kantorovich potential of optimal transport. Let us also mention \cite{mollenhoff2019} which tackles convex relaxation of polyconvex problems of calculus of variations with the use of currents, allowing more general energies than the present work but without any tightness result.

\subsection{The (primal) convex problem}

The idea is to replace $u : \Omega \to D$ by a transport plan $\pi \in \mathcal{M}_+(\bar\Omega \times \bar{D})$ whose marginals are $\Leb_\Omega$ and $\mu$ respectively. If we do that, the potential term and the penalization on the image measure of $u$ are easily translated: what we have gained is that the measure $\mu = u \# \Leb_\Omega$ is replaced by $\mu = \mathrm{proj}_D \# \pi$ a linear expression in $\pi$. What is less obvious is what to do with the term involving the gradient of $u$. 

To that extent, we extend $J\in \mathcal{M}(\bar{\Omega} \times \bar{D}, \R^{dk})$ which is interpreted as the ``flux'' of the transport plan $\pi$. Namely, we will enforce the ``generalized continuity equation''
\begin{equation}
\label{eq_continuity_informal}
\nabla_\Omega \pi + \nabla_D \cdot J = 0,
\end{equation}
and the stored energy will be replaced by 
\begin{equation}
\label{eq_dirichlet_measure_informal}
\iint_{\Omega \times D} W\left(\frac{\ddr J}{\ddr \pi}\right) \ddr \pi
\end{equation}
which is a jointly convex function of $\pi$ and $J$. We explain below in Lemma \ref{lemma_relaxation} how to embed our original problem into this convex relaxation.

Specifically, we say that a nonnegative measure $\pi \in \M_+(\bar{\Omega} \times \bar{D})$ and a matrix valued measure $J \in \M(\Omega \times D, \R^{dk})$ satisfy the generalized continuity equation with boundary conditions $g$ if and only if for all $\varphi \in C^1(\bar{\Omega} \times \bar{D}, \R^d)$ there holds
\begin{equation}
\label{eq_continuity_rigorous}
\iint_{\Omega \times D} \nabla_\Omega \cdot \varphi \, \ddr \pi + \iint_{\Omega \times D} \nabla_D \varphi : \ddr J = \int_{\dr \Omega} \varphi(g) \cdot \nO \, \ddr \sigma,
\end{equation} 
where $\varphi(g)(x)=\varphi(x,g(x))$, $\nO$ is the outward unit normal and $\sigma$ is the surface area measure of $\Omega$.
This is the weak form of \eqref{eq_continuity_informal} with the boundary conditions being given by $\pi(x, \cdot) = \delta_{g(x)}$ for $x \in \dr \Omega$.

\begin{definition}
We say that a triple $(\pi, J, \mu)$ where $\pi \in \M_+(\bar{\Omega} \times \bar{D})$, $J \in \M(\bar{\Omega} \times \bar{D}, \R^{dk})$ and $\mu \in \M_+(\bar{D})$ is admissible if the marginals of $\pi$ are $\Leb_\Omega$ and $\mu$ respectively and if $(\pi, J)$ satisfy \eqref{eq_continuity_rigorous} the generalized continuity equation with boundary conditions $g$. 

\noindent For any admissible triple $(\pi, J, \mu)$, we define its (relaxed) energy by 
\begin{equation*}
E_r(\pi, J, \mu) = \iint_{\Omega \times D}\left[W\left(\frac{\ddr J}{\ddr \pi}\right)+F\right] \ddr \pi  + \Phi(\mu).
\end{equation*}
\end{definition}

\noindent This energy is convex and the set of admissible triples is a convex set: we have precisely built these objects for that purpose.

\begin{remark}
As proved in \cite{Lavenant2017harmonic}, in the case where $W(C) = \frac{1}{2}|C|^2$ is quadratic this energy admits a metric formulation. 

Let us denote by $\W_2$ the quadratic Wasserstein distance on $\M_+(\bar{D})$, extended to $+ \infty$ if the two measures do not have the same total mass (see for instance \cite{villani2003topics}\cite[Chapter 5]{SantambrogioOTAM} for a definition). We fix $\pi \in \M_+(\bar{\Omega} \times \bar{D})$ whose first marginal is $\Leb_\Omega$ and we denote by $(\pi_x)_{x \in \Omega}$ its disintegration with respect to the first component, in particular $\pi_x \in \M_+(\bar{D})$ for a.e. $x \in \Omega$. If $D$ is convex, \cite[Theorem 3.26]{Lavenant2017harmonic} yields  
\begin{equation*}
\min_{J} \iint_{\Omega \times D} \frac{1}{2}\left|\frac{\ddr J}{\ddr \pi}\right|^2 \ddr \pi = \lim_{\varepsilon \to 0} C_d \iint_{\Omega \times \Omega} \frac{\W_2^2( \pi_x, \pi_{x'} )}{2 \varepsilon^{d+2}} \1_{|x - x'| \leqslant \varepsilon} \, \ddr x \, \ddr x',
\end{equation*} 
where the infimum is taken over all $J \in \M(\bar{\Omega} \times \bar{D}, \R^{dk})$ such that $(\pi, J)$ satisfy \eqref{eq_continuity_rigorous} and $C_d$ is a dimensional constant.  

The right hand side in the equation above can be interpreted as a metric definition of the Dirichlet energy \cite{Korevaar1993} for the mapping $x \mapsto \pi_x$ valued in $\M_+(\bar{D})$ endowed with the distance $\W_2$.  
\end{remark}

The relaxed energy is a convex relaxation of our original energy in the following sense.

\begin{lem}
\label{lemma_relaxation}
Let $u \in W^{1,p}_g(\Omega, D)$ be given. We define $\mu_u = u \# \Leb_\Omega$, and we define $\pi_u$ and $J_u$ by, 
\begin{equation*}
\iint_{\Omega \times D} a \, \ddr \pi_u = \int_\Omega a(x,u(x)) \, \ddr \Leb_\Omega(x) \hspace{1cm} \text{and} \hspace{1cm} \iint_{\Omega \times D} B \, \ddr J_u = \int_\Omega B(x,u(x)) : \nabla u(x) \, \ddr \Leb_\Omega(x),
\end{equation*}
for any test functions $a \in C(\bar{\Omega} \times \bar{D})$ and $B \in C(\bar{\Omega} \times \bar{D}, \R^{dk})$.
Then $(\pi_u, J_u, \mu)$ is admissible and $E_r(\pi_u, J_u, \mu_u) = E(u)$. 
\end{lem}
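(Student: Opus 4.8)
The plan is to recognize that $\pi_u$ is the \emph{graph measure} of $u$, that is $\pi_u = (\Id_\Omega, u) \# \Leb_\Omega$, the image of $\Leb_\Omega$ under $x \mapsto (x, u(x))$, and that $J_u$ is this same measure weighted by the Jacobian $\nabla u$. With this in hand, all the required properties follow by testing against suitable functions. First I would check the marginals: testing the defining identity for $\pi_u$ against $a(x,y) = \alpha(x)$ shows the first marginal is $\Leb_\Omega$, and testing against $a(x,y) = \beta(y)$ together with the change of variables formula for the pushforward shows the second marginal is $u \# \Leb_\Omega = \mu_u$.

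For the energy identity, I would observe that since $\pi_u$ is concentrated on the graph of $u$, the measure $J_u$ is absolutely continuous with respect to $\pi_u$, and comparing the defining identity of $J_u$ with $\iint B : \frac{\ddr J_u}{\ddr \pi_u} \, \ddr \pi_u$ forces $\frac{\ddr J_u}{\ddr \pi_u}(x, u(x)) = \nabla u(x)$ for $\Leb_\Omega$-almost every $x$. Consequently $\iint W(\ddr J_u / \ddr \pi_u) \, \ddr \pi_u = \int_\Omega W(\nabla u) \, \ddr \Leb_\Omega$, while $\iint F \, \ddr \pi_u = \int_\Omega F(u) \, \ddr \Leb_\Omega$ directly from the definition of $\pi_u$, and $\Phi(\mu_u) = \Phi(u \# \Leb_\Omega)$ by definition of $\mu_u$; summing the three terms gives $E_r(\pi_u, J_u, \mu_u) = E(u)$.

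The heart of the proof is the generalized continuity equation \eqref{eq_continuity_rigorous}. Given $\varphi \in C^1(\bar\Omega \times \bar D, \R^d)$, I would introduce the composition $\psi(x) = \varphi(x, u(x))$ and apply the chain rule to obtain, almost everywhere on $\Omega$,
\[
\nabla \cdot \psi = (\nabla_\Omega \cdot \varphi)(x, u(x)) + \nabla_D \varphi(x, u(x)) : \nabla u(x).
\]
Integrating over $\Omega$, the two terms on the right are exactly $\iint \nabla_\Omega \cdot \varphi \, \ddr \pi_u$ and $\iint \nabla_D \varphi : \ddr J_u$ by the definitions of $\pi_u$ and $J_u$ (with the continuous test functions $a = \nabla_\Omega \cdot \varphi$ and $B = \nabla_D \varphi$). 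Thus the left-hand side of \eqref{eq_continuity_rigorous} equals $\int_\Omega \nabla \cdot \psi \, \ddr \Leb_\Omega$, and the Gauss-Green theorem for $\psi$ converts this into the boundary integral $\int_{\partial \Omega} \psi \cdot \nO \, \ddr \sigma$. Since the trace of $u$ on $\partial \Omega$ is $g$ and $\varphi$ is continuous, the trace of $\psi$ is $\varphi(\cdot, g)$, recovering precisely the prescribed right-hand side.

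The main obstacle is purely one of regularity: the marginal and energy computations are immediate once $\pi_u$ is identified as the graph measure, but the continuity equation requires justifying that $\psi = \varphi(\cdot, u(\cdot)) \in W^{1,p}(\Omega, \R^d)$ with the above chain rule (using that $\varphi$ is $C^1$, hence Lipschitz on the bounded set $\bar\Omega \times \bar D$), that the Gauss-Green formula is valid for such Sobolev fields, and that taking the trace commutes with composition by $\varphi$, so that the boundary values are indeed $\varphi(\cdot, g)$. These are standard facts from the theory of Sobolev functions, and they constitute the only substantive verification.
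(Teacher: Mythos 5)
Your proposal is correct and follows exactly the route the paper has in mind: the paper leaves this lemma as an exercise (citing \cite[Proposition 5.2]{Lavenant2017harmonic} and noting $\pi_u = \delta_u \Leb_\Omega$, $J_u = \nabla u \, \delta_u \Leb_\Omega$), and your argument --- identifying $\pi_u$ as the graph measure, reading off the marginals and the density $\frac{\ddr J_u}{\ddr \pi_u} = \nabla u$, and verifying \eqref{eq_continuity_rigorous} via the Sobolev chain rule for $x \mapsto \varphi(x,u(x))$ together with the Gauss--Green formula and trace compatibility --- is precisely the intended verification, with the relevant regularity caveats correctly flagged.
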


\noindent We leave the proof as an exercise to the reader, see \cite[Proposition 5.2]{Lavenant2017harmonic} where it is written explicitly. Note that it could be written $\pi_u = \delta_{u} \Leb_\Omega$ and $J_u = \nabla u \, \delta_{u}\Leb_\Omega$. 

\subsection{The dual problem}

Let us write the dual\footnote{As always in optimal transport, we perpetuate the confusion as the dual should be rather called primal: measures are the dual of continuous functions and not the other way around.} of the problem above. It can be guessed by a formal $\inf-\sup$ exchange analogous to what was done in \cite{Brenier2003} and \cite{Lavenant2017harmonic}. The absence of duality gap could be obtained via Fenchel-Rockafellar theorem as written in \cite{Lavenant2017harmonic}, however we will not need it hence we will not prove it. Dual attainment is an open question.

There will be three dual variables: a Lagrange multiplier $\varphi$ for the generalized continuity equation, and then two Lagrange multipliers $\psi, \omega$ for the marginal constraints on $\pi$. 

\begin{definition}
We say that a triple $(\varphi, \psi, \omega)$ where $\varphi \in C^1(\bar{\Omega} \times \bar{D}, \R^d)$, $\psi \in C(\bar{\Omega})$ and $\omega \in C(\bar{D})$ is admissible if for all $x,y \in \Omega \times D$, 
\begin{equation}
\label{eq_dual_constraint}
\psi(x) + \omega(y) +F(x,y) \geqslant \nabla_\Omega \cdot \varphi (x,y) + W^*\big(x,\nabla_D \varphi(x,y)\big). 
\end{equation}

\noindent For any admissible triple $(\varphi, \psi, \omega)$, we define its (relaxed) dual energy by 
\begin{equation*}
E^*_r(\varphi, \psi, \omega) = \int_{\dr \Omega} \varphi(g) \cdot \nO \, \ddr \sigma - \int_\Omega \psi\, \ddr \Leb_\Omega - \Phi^*(\omega). 
\end{equation*}
\end{definition}

\begin{remark}
In the case where there is no integral energy, that is we take $\varphi = 0$, and if $\Phi = \Phiinc$, then our dual problem is exactly Kantorvich dual problem \cite[Chapter 1]{SantambrogioOTAM} for the cost $-F$. 
\end{remark}

\begin{prop}[Weak duality]
\label{prop_weak_duality}
Let $(\pi, J, \mu)$ be admissible for the primal problem and $(\varphi, \psi, \omega)$ admissible for the dual problem. Then 
\begin{equation*}
E_r(\pi, J, \mu) \geqslant E^*_r(\varphi, \psi, \omega)
\end{equation*}
and equality holds if and only if
\begin{equation*}
\begin{cases}
\dst{\nabla_D\varphi(x,y)=DW\left(x,\frac{\ddr J}{\ddr \pi}(x,y)\right) } & \text{for } \pi\text{-a.e. } (x,y) \in \Omega \times D, \\
\text{equality holds in \eqref{eq_dual_constraint}} & \text{for } \pi\text{-a.e. } (x,y) \in \Omega \times D, \\
\omega \in \dr \Phi(\mu). & \\
\end{cases}
\end{equation*} 
\end{prop}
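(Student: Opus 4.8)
The plan is to run the standard weak-duality computation, chaining three elementary inequalities --- the Fenchel--Young inequality for $W$, the pointwise dual constraint \eqref{eq_dual_constraint}, and the Fenchel--Young inequality for $\Phi$ --- and to glue them together using the generalized continuity equation \eqref{eq_continuity_rigorous} and the two marginal constraints on $\pi$. Throughout I may assume $E_r(\pi, J, \mu) < +\infty$, since otherwise the desired inequality is trivial; because $W$ is superlinear ($p > 1$), finiteness forces $J \ll \pi$, so that $\ddr J / \ddr \pi$ is a genuine density and all the integrals below are well defined.

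First I would apply the pointwise Fenchel--Young inequality $W(C) \geqslant P : C - W^*(P)$ with $C = \ddr J / \ddr \pi$ and $P = \nabla_D \varphi$, and integrate against $\pi$, obtaining
$$
\iint_{\Omega\times D} W\!\left(\frac{\ddr J}{\ddr\pi}\right)\ddr\pi \;\geqslant\; \iint_{\Omega\times D}\nabla_D\varphi : \ddr J \;-\; \iint_{\Omega\times D} W^*(\nabla_D\varphi)\,\ddr\pi,
$$
where I have used $\frac{\ddr J}{\ddr\pi}\,\ddr\pi = \ddr J$. Next I would invoke the continuity equation \eqref{eq_continuity_rigorous} to eliminate the flux term, namely $\iint \nabla_D\varphi : \ddr J = \int_{\dr\Omega}\varphi(g)\cdot\nO\,\ddr\sigma - \iint\nabla_\Omega\cdot\varphi\,\ddr\pi$. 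Substituting this and adding $\iint F\,\ddr\pi + \Phi(\mu)$ collects everything into the boundary term, a single $\pi$-integral of $F - \nabla_\Omega\cdot\varphi - W^*(\nabla_D\varphi)$, and $\Phi(\mu)$.

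Then I would apply the dual constraint \eqref{eq_dual_constraint}, which says precisely that $F(x,y) - \nabla_\Omega\cdot\varphi(x,y) - W^*(x,\nabla_D\varphi(x,y)) \geqslant -\psi(x) - \omega(y)$ pointwise; integrating against $\pi$ and using that its marginals are $\Leb_\Omega$ and $\mu$ turns this $\pi$-integral into $-\int_\Omega\psi\,\ddr\Leb_\Omega - \int_D\omega\,\ddr\mu$. Finally I would use $\Phi(\mu) - \int_D\omega\,\ddr\mu \geqslant -\Phi^*(\omega)$, immediate from the definition of the Legendre transform $\Phi^*$, to arrive at $E_r(\pi,J,\mu) \geqslant E^*_r(\varphi,\psi,\omega)$. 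For the equality clause I would simply track the three inequalities just used: equality throughout is equivalent to simultaneous equality in each one, that is, equality $\pi$-a.e.\ in Fenchel--Young for $W$ (equivalent to $\nabla_D\varphi = DW(\ddr J/\ddr\pi)$ for $\pi$-a.e.\ $(x,y)$), equality $\pi$-a.e.\ in \eqref{eq_dual_constraint}, and equality in Fenchel--Young for $\Phi$ (which is exactly the condition $\omega\in\dr\Phi(\mu)$).

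There is no deep obstacle here: the argument is essentially a bookkeeping exercise. The two points that deserve a little care are the justification that $J\ll\pi$ (so that the Legendre step is genuinely pointwise and no singular part of $J$ is silently discarded when one writes $W(\ddr J/\ddr\pi)$), and the passage from ``equality in the integrated inequalities'' to ``equality $\pi$-a.e.\ in the integrands.'' The latter is valid because each integrand inequality has a fixed sign and is integrated against the nonnegative measure $\pi$, so a vanishing integral of a nonnegative function forces the integrand to vanish $\pi$-a.e.
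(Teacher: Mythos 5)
Your proposal is correct and follows essentially the same route as the paper's proof: the same three inequalities (Fenchel--Young for $W$, the pointwise constraint \eqref{eq_dual_constraint}, and Fenchel--Young for $\Phi$) glued by the continuity equation and the marginal constraints, merely applied in the reverse order, with the identical characterization of equality by tracking when each inequality is saturated. Your extra remarks on $J \ll \pi$ and on passing from integrated equality to $\pi$-a.e.\ equality of the integrands are sound points of rigor that the paper leaves implicit.
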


\begin{proof}
Let's compute the difference: we take $(\pi, J, \mu)$ admissible for the primal problem and $(\varphi, \psi, \omega)$ admissible for the dual problem. Then 
\begin{align*}
E_r & (\pi, J, \mu) - E^*_r(\varphi, \psi, \omega) \\
& = \iint_{\Omega \times D}\left[ W\left(\frac{\ddr J}{\ddr \pi}\right)+F\right]\ddr \pi + \Phi(\mu) - \int_{\dr \Omega} \varphi(g) \cdot \nO \, \ddr \sigma + \int_\Omega \psi\, \ddr \Leb_\Omega + \Phi^*(\omega) \\
& \geqslant \iint_{\Omega \times D}\left[ W\left(\frac{\ddr J}{\ddr \pi}\right)+F\right]\ddr \pi   + \int_\Omega \psi\, \ddr \Leb_\Omega + \int_D \omega \, \ddr \mu - \int_{\dr \Omega} \varphi(g) \cdot \nO \, \ddr \sigma \\
& =  \iint_{\Omega \times D}\left[ W\left(\frac{\ddr J}{\ddr \pi}\right)+F+\psi+\omega\right]\ddr \pi  - \int_{\dr \Omega} \varphi(g) \cdot \nO \, \ddr \sigma,
\end{align*} 
where we have used the definition of $\Phi^*$ and then the assumption that the marginals of $\pi$ are $\Leb_\Omega$ and $\mu$. Using the generalized continuity equation and integrating the constraint \eqref{eq_dual_constraint} with respect to $\pi$, 
\begin{align*}
\iint_{\Omega \times D} & \left[ W\left(\frac{\ddr J}{\ddr \pi}\right)+F+\psi+\omega\right]\ddr \pi  - \int_{\dr \Omega} \varphi(g) \cdot \nO \, \ddr \sigma \\
& = \iint_{\Omega \times D}\left[ W\left(\frac{\ddr J}{\ddr \pi}\right)+F+\psi+\omega-\nabla_\Omega \cdot \varphi-\nabla_D\varphi:DW\left(\frac{\ddr J}{\ddr \pi}\right)\right]\ddr \pi \\
& \geqslant \iint_{\Omega \times D}\left[ W\left(\frac{\ddr J}{\ddr \pi}\right)+W^*\big(\nabla_D\varphi\big)-\nabla_D\varphi:DW\left(\frac{\ddr J}{\ddr \pi}\right)\right]\ddr \pi  .
\end{align*}
Eventually, using the definition of $W^*$ we get 
\begin{equation*}
E_r  (\pi, J, \mu) - E^*_r(\varphi, \psi, \omega) \geqslant 0. 
\end{equation*} 
Tracking back all the inequalities, we deduce the necessary and sufficient conditions for the absence of duality gap. 
\end{proof}

\subsection{A particular solution for the dual}

So far, putting together Lemma \ref{lemma_relaxation} and Proposition \ref{prop_weak_duality}, we have
\begin{equation*}
\sup_{(\varphi, \psi, \omega) \text{ admissible}} E^*_r(\varphi, \psi, \omega) \leqslant \min_{(\pi, J, \mu) \text{ admissible}} E_r(\pi, J, \mu) \leqslant \min_{u \in W^{1,p}_g(\Omega, D)} E(u).
\end{equation*}
To prove that everything boils down to a big equality, it is sufficient to provide one competitor in the dual which matches $E(u)$.

We handle this for the case of the Dirichlet energy, for which we can extend the result to $\lambda$ convex pressure as in Theorem \ref{theo_dirichlet}. We recall that $\lambda_1(\Omega) > 0$ is the first eigenvalue of the Dirichlet Laplacian on $\Omega$.

\begin{prop}
\label{prop_no_gap}
Let $W(C)=\frac{1}{2}|C|^2$ and assume $F$ is smooth and $\lambda_F$ convex in its second variable.  We suppose that $u$ is a smooth solution of the Euler-Lagrange equation \eqref{eq_Euler_Lagrange} for some $\omega \in \dr \Phi(u \# \Leb_\Omega)$. Assume $\omega$ is $\lambda$-convex with $\lambda > - \lambda_1(\Omega) - \lambda_F$. Then there exists $(\varphi_u, \psi_u, \omega_u)$ admissible such that 
\begin{equation*}
E^*_r(\varphi_u, \psi_u, \omega_u) = E(u).
\end{equation*}
\end{prop}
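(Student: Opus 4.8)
The plan is to construct an explicit dual competitor $(\varphi_u, \psi_u, \omega_u)$ and verify it is admissible and achieves the value $E(u)$. The natural guesses, motivated by the equality conditions in Proposition \ref{prop_weak_duality}, are to take $\omega_u = \omega$ (the pressure, which already satisfies $\omega \in \dr \Phi(\mu)$ by hypothesis), and to set $\varphi_u(x,y) = \nabla u(x)$ viewed appropriately — more precisely, since $W(C) = \frac{1}{2}|C|^2$ gives $DW(C) = C$, the first equality condition $\nabla_D \varphi = DW(\ddr J/\ddr \pi)$ forces $\nabla_D \varphi_u(x,y)$ to equal $\nabla u(x)$ along the graph $y = u(x)$. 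The cleanest choice is $\varphi_u(x,y) = \nabla u(x) \cdot (y - u(x)) + \text{(something)}$, but because $\varphi_u$ must be a genuine function on $\bar\Omega \times \bar D$ and the constraint \eqref{eq_dual_constraint} must hold for \emph{all} $(x,y)$, I would instead look for $\varphi_u$ linear in $y$, say $\varphi_u(x,y) = A(x) y + b(x)$ for a matrix field $A(x) \in \R^{d \times k}$ and vector field $b(x)$, so that $\nabla_D \varphi_u(x,y) = A(x)^\top$ is independent of $y$ and can be matched to $\nabla u(x)$ on the graph.

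First I would record what \eqref{eq_dual_constraint} demands. With $W^*(\xi) = \frac{1}{2}|\xi|^2$, the dual constraint reads
\begin{equation*}
\psi_u(x) + \omega(y) + F(x,y) \geqslant \nabla_\Omega \cdot \varphi_u(x,y) + \tfrac{1}{2} |\nabla_D \varphi_u(x,y)|^2,
\end{equation*}
and it must hold for all $(x,y)$ with equality on the graph $y = u(x)$. The strategy is to define $\psi_u(x)$ by imposing equality at $y = u(x)$, i.e. $\psi_u(x) = \nabla_\Omega \cdot \varphi_u(x,u(x)) + \frac{1}{2}|\nabla_D \varphi_u|^2 - \omega(u(x)) - F(x,u(x))$, and then the real content is to verify that the inequality holds for \emph{all} $y$, not just on the graph. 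This is exactly where the $\lambda$-convexity of $\omega$ and the eigenvalue bound enter: the left side, as a function of $y$ (with $x$ fixed), is $\lambda$-convex (using $\lambda_F$-convexity of $F$ too), while the right side will be concave or controlled in $y$ after the linear-in-$y$ ansatz renders $\nabla_D \varphi_u$ constant in $y$. The gap between the two sides as $y$ moves away from $u(x)$ must be nonnegative, and the worst case is governed by the Poincaré-type constant $\lambda_1(\Omega)$ precisely as in the proof of Theorem \ref{theo_dirichlet}.

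The main obstacle, and the step I expect to require the most care, is choosing $\varphi_u$ so that simultaneously (i) $\nabla_D \varphi_u$ agrees with $\nabla u$ along the graph, (ii) the continuity-equation pairing produces the boundary term $\int_{\dr\Omega} \varphi_u(g)\cdot \nO \, \ddr\sigma$ matching the correct piece of $E(u)$, and (iii) the global inequality in $y$ survives. I anticipate that the honest construction is $\varphi_u(x,y) = \nabla u(x)\,(y)$ interpreted so that its $\Omega$-divergence reproduces $\nabla \cdot DW(\nabla u) = \Delta u$ and hence, via \eqref{eq_Euler_Lagrange}, couples to $(\nabla\omega)\circ u$ and $DF(u)$; the Euler--Lagrange equation is what makes the first-order (in $y - u(x)$) terms cancel, leaving a second-order remainder controlled by $\lambda + \lambda_1(\Omega) + \lambda_F \geqslant 0$. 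Once admissibility is established, computing $E^*_r(\varphi_u, \psi_u, \omega_u)$ and checking it equals $E(u)$ is a matter of integrating $\psi_u$ against $\Leb_\Omega$, using $\omega \in \dr\Phi(\mu)$ to turn $-\Phi^*(\omega) + \int_D \omega\,\ddr\mu$ into $\Phi(\mu)$, and recognizing the boundary integral as the stored-energy contribution — essentially reversing the chain of equalities in Proposition \ref{proposition_main}.
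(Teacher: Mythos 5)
Your construction fails precisely at the step that carries the whole content of the proposition. You commit to a $\varphi_u$ that is \emph{linear} in $y$, so that $\nabla_D\varphi_u(x,y)=A(x)^\top$ is independent of $y$; then the right-hand side of \eqref{eq_dual_constraint}, namely $\nabla_\Omega\cdot\varphi_u(x,y)+\frac12|\nabla_D\varphi_u(x,y)|^2$, is an \emph{affine} function of $y$ for each fixed $x$. Imposing equality at $y=u(x)$ (and the gradient matching forced by equality at an interior point) makes that affine function exactly the tangent affine approximation of $y\mapsto\omega(y)+F(x,y)$ at $u(x)$, up to the constant absorbed in $\psi_u$. Hence the constraint ``left $\geqslant$ right for all $y$'' becomes the statement that $\omega(\cdot)+F(x,\cdot)$ lies above its tangent plane at $u(x)$, i.e.\ it requires $\lambda+\lambda_F\geqslant 0$. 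But the proposition allows $\lambda+\lambda_F$ to be negative, as long as $\lambda+\lambda_F>-\lambda_1(\Omega)$, and a genuinely $\lambda$-convex function with $\lambda<0$ does not lie above its tangent planes (the pure torsion example, with concave pressure $\omega_a=-\frac{a^2}{2}|x_h|^2$ and small $a$, is exactly a case covered by the proposition where your ansatz is impossible). Your hope that ``the worst case is governed by $\lambda_1(\Omega)$ precisely as in the proof of Theorem \ref{theo_dirichlet}'' cannot be realized within the linear ansatz: in that theorem the Poincar\'e inequality is an \emph{integral} inequality applied to $v-u$ vanishing on $\dr\Omega$, whereas \eqref{eq_dual_constraint} is a \emph{pointwise} inequality in $(x,y)$, and an affine-in-$y$ right-hand side offers no mechanism for $\lambda_1(\Omega)$ to enter.

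The missing idea in the paper is to take $\varphi_u$ \emph{quadratic} in $y$:
\begin{equation*}
\varphi_u(x,y)=y^\top\nabla u(x)+\frac{|u(x)-y|^2}{2}\,w(x)^\top,
\end{equation*}
where $w\in C^1(\bar\Omega,\R^d)$ satisfies the pointwise differential inequality $\nabla\cdot w+|w|^2<-\lambda_1(\Omega)+\varepsilon$; such a $w$ exists by Lemma \ref{lem_change_variables} (take $w=\nabla\ln f$ with $f$ a smoothed first Dirichlet eigenfunction, which is the pointwise avatar of the Poincar\'e constant). The quadratic term does not disturb the graph conditions, since its $y$-gradient vanishes at $y=u(x)$, but when one expands $\nabla_\Omega\cdot\varphi_u+\frac12|\nabla_D\varphi_u|^2$ the cross terms cancel (this cancellation is special to $W(C)=\frac12|C|^2$, cf.\ Remark \ref{rk_quadratic}), leaving
\begin{equation*}
y^\top\Delta u(x)+\tfrac12|\nabla u(x)|^2+\tfrac12\bigl(\nabla\cdot w(x)+|w(x)|^2\bigr)|y-u(x)|^2
\leqslant y^\top\Delta u(x)+\tfrac12|\nabla u(x)|^2+\tfrac{-\lambda_1(\Omega)+\varepsilon}{2}|y-u(x)|^2.
\end{equation*}
It is this extra negative-quadratic cushion $\frac{-\lambda_1(\Omega)+\varepsilon}{2}|y-u(x)|^2$ that, combined with the Euler--Lagrange cancellation of first-order terms and the $(\lambda+\lambda_F)$-convexity of $\omega+F$, yields the remainder $\frac{\lambda+\lambda_F+\lambda_1(\Omega)-\varepsilon}{2}|y-u(x)|^2\geqslant 0$. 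Your remaining steps ($\omega_u=\omega$, defining $\psi_u$ by equality on the graph, using \eqref{eq_Euler_Lagrange} to kill the first-order terms, and concluding via the equality conditions of Proposition \ref{prop_weak_duality}) match the paper, but without the quadratic augmentation your argument only proves the weaker statement where $\omega+F$ is convex.
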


\begin{proof}
Recall that if we define $(\pi_u, J_u, \mu_u)$ as in Lemma \ref{lemma_relaxation} we just need to prove that $E^*_r(\varphi_u, \psi_u, \omega_u) = E_r(\pi_u, J_u, \mu_u)$

We take $\omega_u = \omega$. In particular $\omega_u \in \dr \Phi(\mu_u)$.  

We choose $\varepsilon > 0$ such that $\lambda + \lambda_1(\Omega) + \lambda_F > \varepsilon$.
Let $w\in C^1(\bar{\Omega},\R^d)$ be a function such that $\nabla \cdot w + |w|^2 < - \lambda_1(\Omega) + \varepsilon$: see Lemma \ref{lem_change_variables} below for the existence of such a $w$. We define $\varphi_u$, which is valued in $\R^d$ and that we see as a row vector as 
\begin{equation*}
\varphi_u(x,y) = y^\top \nabla u(x) + \frac{|u(x) - y|^2}{2} w(x)^\top. 
\end{equation*} 
Without the quadratric term (that is if $w = 0$) we retrieve the solution proposed by Brenier in \cite{Brenier2003} for the case where there is no constraint on $\mu$, that is $\Phi = 0$. This quadratic term will be crucial to go from $\omega$ convex to $\omega$ that is $\lambda$-convex. Taking the derivative of $\varphi_u$ w.r.t. $y$ and evaluating at $y = u(x)$, we get 
\begin{equation*}
\nabla_D \varphi_u(x,u(x)) = \nabla u(x)
\end{equation*} 
which exactly reads as 
\begin{equation*}
\nabla_D \varphi_u(x,y)=DW\left(\frac{\ddr J_u}{\ddr \pi_u}(x,y)\right) \hspace{0.5cm} \text{for } \pi_u\text{-a.e. } (x,y) \in \Omega \times D.
\end{equation*}

It remains to choose the function $\psi_u$ such that the constraint \eqref{eq_dual_constraint} is satisfied. Let us compute the right hand side of \eqref{eq_dual_constraint}: we find 
\begin{align}
\label{eq_aux_expansion}
\nabla_\Omega \cdot \varphi_u & (x,y)  + \frac{1}{2} |\nabla_D \varphi_u(x,y)|^2 \\
& = y^\top \Delta u(x) + (\nabla \cdot w)(x) \frac{|y - u(x)|^2}{2} - (u(x) - y)^\top \nabla u(x) w(x) + \frac{1}{2} \left| \nabla u(x) + w(x) (u(x) - y)^\top \right|^2 \nonumber \\
& = y^\top \Delta u(x) + \frac{1}{2} |\nabla u(x)|^2 + \frac{1}{2} \left( \nabla \cdot w(x) + |w(x)|^2 \right) |y - u(x)|^2 \nonumber  \\
& \leqslant y^\top \Delta u(x) + \frac{1}{2} |\nabla u(x)|^2 + \frac{-\lambda_1(\Omega) + \varepsilon}{2} | y - u(x)|^2, \nonumber
\end{align}
where the last inequality derives from the way $w$ was chosen and is an equality if $y = u(x)$. Notice that some cancellation of the cross term occurred because of the quadratic structure of the problem. Given this computation, we define
\begin{equation*}
\psi_u(x) = - \omega_u\big(u(x)\big) - F(x,u(x))  + (\nabla \omega_u)\big(u(x)\big) \cdot u(x) + DF(x,u(x)) \cdot u(x) + \frac{1}{2} |\nabla u(x)|^2.
\end{equation*} 
To see if \eqref{eq_dual_constraint} is satisfied we compute 
\begin{align*}
\psi_u (x) & + \omega_u(y)  + F(x,y)  - \nabla_\Omega \cdot \varphi_u  (x,y)  - \frac{1}{2} |\nabla_D \varphi_u(x,y)|^2 \\
& \geqslant \omega_u(y) + F(x,y) - \omega_u(u(x)) - F(x,u(x)) + (\nabla \omega_u)\big(u(x)\big) \cdot u(x) + DF(x,u(x)) \cdot u(x)  \\  
& \hspace{1cm} - y^\top \Delta u (x) + \frac{\lambda_1(\Omega) - \varepsilon}{2}  |y - u(x)|^2 \\
& = \omega_u(y) + F(x,y) - \omega_u\big(u(x)\big) - F(x,u(x)) - \left[ (\nabla \omega_u)\big(u(x)\big) + DF(x,u(x))  \right]\cdot \big(y - u(x)\big) \\
& \hspace{1cm} + \frac{\lambda_1(\Omega) - \varepsilon}{2}|y - u(x)|^2 \\
& \geqslant \frac{\lambda + \lambda_F + \lambda_1(\Omega)- \varepsilon}{2} |y - u(x)|^2.
\end{align*}
In this computation, the equality derives from the equilibrium equations \eqref{eq_Euler_Lagrange} and the last inequality comes from the $(\lambda + \lambda_F)$-convexity of $y \mapsto \omega(y) + F(x, y)$. But $\varepsilon$ was chosen in such a way that $\lambda + \lambda_F + \lambda_1(\Omega)- \varepsilon > 0$ hence $(\varphi_u, \psi_u, \omega_u)$ satisfies \eqref{eq_dual_constraint} and there is equality if and only if $y = u(x)$, that is on the support of $\pi_u$.    
\end{proof}

\begin{remark}
\label{rk_quadratic}
The cancellation that occurred in \eqref{eq_aux_expansion} is crucial to get the results and is what would break down if we replaced the Dirichlet energy by $\int_\Omega W(\nabla u)\ddr \Leb_\Omega$ with $W$ convex.  However, in the case that $\omega$ is convex with general energy, we obtain the same result as Proposition \ref{prop_no_gap} by choosing $\varphi_u(x,y)=y^\top DW\big(\nabla u(x)\big)$.
\end{remark}

\noindent During the proof we have used the following lemma, which relies on a standard change of variables in nonlinear elliptic equations. 

\begin{lem}
\label{lem_change_variables}
Let $\lambda > - \lambda_1(\Omega)$. Then there exists $w \in C^\infty(\bar{\Omega}, \R^d)$ such that 
\begin{equation*}
\nabla \cdot w + |w|^2 < \lambda
\end{equation*}
everywhere on $\Omega$.
\end{lem}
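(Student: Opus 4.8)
The plan is to linearize the inequality through the logarithmic substitution that is standard for Riccati-type first order equations. Writing $w = \nabla \log \phi = \nabla \phi / \phi$ for a positive function $\phi$, a direct computation gives
\begin{equation*}
\nabla \cdot w + |w|^2 = \frac{\Delta \phi}{\phi} - \frac{|\nabla \phi|^2}{\phi^2} + \frac{|\nabla \phi|^2}{\phi^2} = \frac{\Delta \phi}{\phi}.
\end{equation*}
Hence it suffices to produce a function $\phi \in C^\infty(\bar{\Omega})$ with $\phi > 0$ on all of $\bar{\Omega}$ (so that $w$ is smooth up to the boundary) and satisfying the linear inequality $\Delta \phi < \lambda \phi$ pointwise on $\Omega$. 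The whole difficulty is thus transferred from a nonlinear first-order inequality to finding a strictly positive supersolution of $-\Delta - \lambda$ that does not degenerate at $\partial \Omega$.

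The natural candidate is a Dirichlet eigenfunction, but the first eigenfunction of $\Omega$ itself vanishes on $\partial \Omega$ and would force $w$ to blow up there. I would therefore work on a slightly larger domain. Fix an open connected set $\Omega' \supset \bar{\Omega}$, for instance $\Omega' = \{x : \mathrm{dist}(x, \Omega) < \varepsilon\}$, and let $\phi$ be the first Dirichlet eigenfunction of $\Omega'$, normalized to be positive, so that $-\Delta \phi = \lambda_1(\Omega') \phi$ on $\Omega'$. Since $\bar{\Omega}$ is compactly contained in $\Omega'$, the function $\phi$ is smooth (indeed analytic) and bounded below by a positive constant on $\bar{\Omega}$; consequently $w = \nabla \log \phi$ is well defined and smooth on $\bar{\Omega}$, and it realizes the constant value
\begin{equation*}
\nabla \cdot w + |w|^2 = \frac{\Delta \phi}{\phi} = - \lambda_1(\Omega')
\end{equation*}
everywhere on $\bar{\Omega}$. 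It thus remains only to arrange $-\lambda_1(\Omega') < \lambda$, i.e. $\lambda_1(\Omega') > -\lambda$.

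The one point that must be justified, and the step I expect to be delicate, is that $\varepsilon$ can be chosen so small that $\lambda_1(\Omega') > - \lambda$. Domain monotonicity gives $\lambda_1(\Omega') \leqslant \lambda_1(\Omega)$ for free, so what is genuinely needed is the reverse, namely continuity from above of the first Dirichlet eigenvalue under an outward perturbation of the Lipschitz domain $\Omega$, that is $\lambda_1(\Omega_\varepsilon) \to \lambda_1(\Omega)$ as $\varepsilon \to 0$. This follows from the standard argument: take the normalized first eigenfunctions $u_\varepsilon \in W^{1,2}_0(\Omega_\varepsilon)$, extract a weak $W^{1,2}$ limit $u$ (strongly convergent in $L^2$ on a fixed ball by Rellich), check that $u \in W^{1,2}_0(\Omega)$ using that the collar $\Omega_\varepsilon \setminus \Omega$ shrinks and $\Omega$ satisfies an exterior cone condition, and compare Rayleigh quotients by weak lower semicontinuity to obtain $\liminf_\varepsilon \lambda_1(\Omega_\varepsilon) \geqslant \lambda_1(\Omega)$. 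Granting this, since by hypothesis $\lambda > -\lambda_1(\Omega)$ we may fix $\varepsilon$ with $\lambda_1(\Omega_\varepsilon) > -\lambda$, set $\Omega' = \Omega_\varepsilon$ and take $w = \nabla \log \phi$ as above; this yields $\nabla \cdot w + |w|^2 = -\lambda_1(\Omega') < \lambda$ everywhere on $\Omega$, completing the proof.
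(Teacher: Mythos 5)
Your proof is correct, and its first move coincides with the paper's: the logarithmic substitution $w = \nabla \log \phi$ turns the Riccati-type inequality $\nabla\cdot w + |w|^2 < \lambda$ into the linear problem of finding $\phi \in C^\infty(\bar{\Omega})$ with $\phi>0$ on $\bar{\Omega}$ and $\Delta\phi < \lambda\phi$ (the paper writes this as $z=\ln f$, $w=\nabla z$ with $\Delta f \leqslant \lambda f$). Where the two arguments genuinely diverge is in how this strictly positive supersolution is produced. The paper's recipe is one line: obtain $f$ ``by mollifying the first eigenfunction of the Dirichlet Laplacian on $\Omega$.'' You instead pass to an enlarged domain $\Omega_\varepsilon \supset \bar{\Omega}$, take \emph{its} first eigenfunction --- positive, analytic, and bounded below on the compact subset $\bar{\Omega}$ by interior regularity --- and reduce everything to the outer continuity $\lambda_1(\Omega_\varepsilon) \to \lambda_1(\Omega)$, which you prove by the standard Rayleigh-quotient compactness argument. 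This is a different key lemma, and arguably the more robust one: read literally, the paper's mollification is delicate, because the distributional Laplacian of the zero-extension of the eigenfunction $\phi_1$ equals $-\lambda_1\phi_1\1_\Omega$ \emph{plus a nonnegative surface measure} $(-\dr_n\phi_1)\,\ddr\sigma$ on $\dr\Omega$ (in one dimension, $(\sin x)\,\1_{(0,\pi)}$ has second derivative $-\sin x\,\1_{(0,\pi)}+\delta_0+\delta_\pi$); after mollification at scale $\delta$ this surface term is of size $\delta^{-1}$ in a boundary layer where the mollified function itself is only of size $\delta$, so the pointwise inequality $\Delta f\leqslant \lambda f$ fails there, and some outward enlargement of the domain --- precisely your device --- seems needed to make that sketch rigorous. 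What your route costs is the eigenvalue-stability statement: your weak-limit argument needs that a $W^{1,2}(\R^d)$ function vanishing a.e.\ outside $\Omega$ restricts to an element of $W^{1,2}_0(\Omega)$; this is where the regularity of $\dr\Omega$ enters (for Lipschitz domains it follows from the segment property --- that, rather than the exterior cone condition you invoke, is the standard tool, though both hold here). Since the paper assumes Lipschitz boundaries throughout, this ingredient is available and your proof is complete.
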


\begin{proof}
We will look rather for $z \in C^\infty(\bar{\Omega})$ such that 
\begin{equation*}
\Delta z + |\nabla z|^2 < \lambda
\end{equation*}
as we can always take $w = \nabla z$. To that extent, let $f$ a smooth strictly positive function such that $\Delta f \leqslant  \lambda f$.
This is always possible by mollifying the first eigenvalue of the Dirichlet Laplacian on $\Omega$. Then defining $z = \ln f$ works as we leave the reader to check.  
\end{proof}

\subsection{Another proof of Theorem \ref{theo_dirichlet}}

We can give another proof of Theorem \ref{theo_dirichlet} for the Dirichlet energy, that is, when $W(C)= \frac{1}{2} C^2$, relying on this convex relaxation. 

\begin{proof}[Proof of Theorem \ref{theo_dirichlet}]
Let $u$ be a smooth solution of the Euler-Lagrange equation \eqref{eq_Euler_Lagrange} for some $\omega \in \dr \Phi(u \# \Leb_\Omega)$. We assume $\omega$ is $\lambda$-convex with $\lambda > -\lambda_1(\Omega) - \lambda_F$. 

Let $(\varphi_u, \psi_u, \omega_u)$ be the optimal solution of the dual problem built in the proof of Proposition \ref{prop_no_gap}. For any competitor $v \in W^{1,2}_g(\Omega, D)$ there holds
\begin{equation*}
E(v) \geqslant E_r(\pi_v, J_v, \mu_v) \geqslant E_r^*(\varphi_u, \psi_u, \omega_u) = E(u)
\end{equation*}
where we have used successively Lemma \ref{lemma_relaxation}, Proposition \ref{prop_weak_duality} and then Proposition \ref{prop_no_gap}. Moreover, if there is equality then by Proposition \ref{prop_weak_duality} the constraint \eqref{eq_dual_constraint} (with $(\varphi_u, \psi_u, \omega_u)$) must be an equality on the support of $\pi_v$, that is for $y = v(x)$. However we have seen in the proof of Proposition \ref{prop_weak_duality} that equality happens only for $y = u(x)$. This yields $u = v$.
\end{proof}

\section*{Acknowledgments}

The authors are partially supported by the Natural Sciences and Engineering Research Council of Canada (NSERC) through the Discovery Grant program. HL is partially supported by the Pacific Institute for the Mathematical Sciences (PIMS) through a PIMS postdoctoral fellowship.

\bibliography{Elasticity}
\bibliographystyle{plain}

\end{document}